\newtheorem{theorem}{Theorem}[section]
\newtheorem{proposition}{Proposition}
 \numberwithin{equation}{section}
\newtheorem{remark}{Remark}
\newcommand{\keywords}
\def\bc{\begin{center}}       \def\ec{\end{center}}
\def\ba{\begin{array}}        \def\ea{\end{array}}
\def\be{\begin{equation}}     \def\ee{\end{equation}}
\def\bea{\begin{eqnarray}}    \def\eea{\end{eqnarray}}
\def\beaa{\begin{eqnarray*}}  \def\eeaa{\end{eqnarray*}}
\def\mathbb{\Bbb}
\begin{document}

\title{\bf Existence and stability of nonconstant positive steady states of morphogenesis models}
\author{Haohao Chen  \thanks{ {\tt chenhaohao@2013.swufe.edu.cn}.},  Bo Tong \thanks{ {\tt tongql@2013.swufe.edu.cn}.},  Qi Wang   \thanks{ {\tt qwang@swufe.edu.cn}.}\\
Department of Mathematics\\
Southwestern University of Finance and Economics\\
555 Liutai Ave, Wenjiang, Chengdu, Sichuan 611130, China
}

\date{}
\maketitle

\abstract
In this paper, We study an one--dimensional morphogenesis model considered by C. Stinner et al. in \cite{STW} (Math. Meth. Appl. Sci. 2012, \textbf{35} 445--465).  Under homogeneous boundary conditions, we prove the existence of nonconstant positive steady states through local bifurcation theories.  We also rigourously study the stability of the nonconstant solutions when the sensitivity function are chosen to be linear and logarithmic function respectively.  Finally, we present numerical solutions to illustrate the formation of stable spatially inhomogeneous patterns.  Our numerical simulations suggests that this model can develop very complicated and interesting structures even over one--dimensional finite domains.

\textbf{Keywords: steady state, bifurcation, morphogenesis.}

\section{Introduction}

It is the goal of this paper to study the existence and stability of nonconstant positive steady states of the following one--dimensional system
\begin{equation}\label{11}
\left\{
\begin{array}{ll}
u_t=(d_1u_x-\chi u \Phi_x(v))_x+\lambda- u,& x \in (0,L),t>0,\\
v_t=d_2v_{xx}+1-(1+\lambda)v+u, & x\in(0,L),t>0,\\
u_x(x,t)=v_x(x,t)=0,&x=0,L,t>0,\\
u(x,0)=u_0(x)\geq0,v(x,0)=v_0(x)\geq0,&x \in (0,L),
\end{array}
\right.
\end{equation}
where $u=u(x,t)$, $v=v(x,t)$; $d_1$, $d_2$, $\chi$ and $\lambda$ are positive constants, and $\Phi$ is a continuously differentiable function.  This system was proposed by Bollenbach et al. in \cite{BKP} as a limit case of morphogenesis models that describe the transport of morphogens in Epithelia.  To be precise, $u(x,t)$ denotes the density of total ligand (an ion or molecule that binds to a central metal atom to form a metal complex) at space--time location $(x,t)$ and $v(x,t)$ represents the density of the free and bound receptor; $d_1$ and $d_2$ are the diffusion rates of the ligand and receptor respectively and $\chi$ measures the attraction rate of the receptor to the morphogen. $\Phi$ reflects the variation of morphogen transportation induced by the receptor concentration gradient with respect to the levels of receptor density.  Moreover, the boundary conditions represent an one--dimensional enclosed domain and the initial data are assumed to be nonnegative and not identically zero.

Morphogenesis (from the Greek \emph{morph¨º} shape and \emph{genesis} creation) is a biological process that has been studied since the early 20th century and it is an essential mechanism for the cellular growth and differentiation to create organs and their organizations.  Morphogenesis can take place in cell cultures, inside tumor cell masses or in a mature organism.  It causes organisms to develop their shapes and then controls the organized spatial distribution of cells during the embryonic development of an organism.  Morphogenetic responses usually occur through chemotaxis process which may be induced by hormones in organisms, environmental chemicals produced by other organisms or radionuclides released by the cells, or the mechanical stresses induced by spatial patterning of the cells.

Theoretical and mathematical modelings have been proposed to study how the locally secreted morphogens are transported to the cells of embryo over the past few decades.   The earliest ideas and mathematical explanations dates back to the seminal work Alan Turing [3] in 1952 on how physical processes affect biological growth and the development of natural patterns such as the spirals of phyllotaxis.  Turing correctly predicted that the diffusion and reaction of two different chemical signals, one activating and the other deactivating growth, were able to set up patterns of development.  Turing's revolutionary idea was that the diffusion, which is a smoothing process for single equation, can interact with chemical reactions to destabilize homogeneity for systems, therefore nontrivial patterns (nonconstant solutions) can emerge through bifurcations.  It is worthwhile to mention that chemotaxis is well accepted to be a leading organism in the spatio--temporal behaviors and pattern formations for many biological processes.  Recently, several authors have proposed and studied some PDEs to incorporate the chemotaxis in the transportation of morphogens in Epithelia.  See \cite{BKP}, \cite{MNS}, \cite{STW}, and the references therein.

The mathematical model proposed in \cite{BKP} consists of two equations and its general form in spatially one--dimensional domain reads
\begin{equation}\label{12}
\left\{
\begin{array}{ll}
u_t=(d_1(u,v)u_x-\chi (u,v)v_x)_x-k_1(u,v)u,&x \in (0,L),t>0,\\
v_t=d_2(u,v)v_{xx}+k_2(u,v)-k_3(u,v)v,& x\in(0,L),t>0,
\end{array}
\right.
\end{equation}
where $k_1$ and $k_3$ are the degradation rates of morphogen and receptor respectively, while $k_2$ measures the production of receptor at the cells surfaces.   In particular, the coefficients in \cite{BKP} are chosen to be
\[d_1(u,v)=\frac{d_1 v}{c_1u+c_2v}, d_2(u,v)=0, \chi(u,v)=\frac{d_2 v}{c_1u+c_2v}, k_3(u,v)=\frac{c_3v-c_4u}{v},\]
where $c_i$, $i=1,2,3,4$, are positive constants.  The degradation rate $k_1$ is assumed to be a constant and $k_2(u,v)$ is chosen to be a linear function of the receptor and the blind receptors.  The reader is referred to \cite{BKP} the mathematical modeling and the detailed justification for the choices of these functions.  In \cite{STW}, C. Stinner et al. investigated the well--posedness of the following simplified system
\begin{equation}\label{13}
\left\{
\begin{array}{ll}
u_t=(u_x-\chi \frac{u}{v}v_x)_x-\mu u,&x \in (0,L),t>0,\\
v_t=1-v+u,&x\in(0,L),t>0,
\end{array}
\right.
\end{equation}
subject to the boundary condition
\[u_x(0,t)-\chi \frac{u}{v}v_x(0,t)=-\nu,u_x(L,t)-\chi \frac{u}{v}v_x(L,t)=0,\]
with $\nu>0$ and nonnegative initial conditions $u(x,0)=u_0(x), v(x,0)=v_0(x),x\in(0,L)$.  It is proved that (\ref{13}) admits global and bounded weak solutions for all $\chi\in(0,1)$; moreover, positive solutions of (\ref{13}) converge to the unique steady state provided that $\chi$ is sufficiently small and $\lambda$ is sufficiently large.  We want to note that non--flux boundary condition is imposed at $x=1$ and the flux through $x=0$ is assumed to be related to the production rate of the ligand.

It is noteworthy to mention that the Keller--Segel type model (\ref{12}) has also been widely utilized in the modeling of chemotaxis.  The strongly coupled PDEs have demonstrated their abilities in capturing interesting and important phenomena through chemotaxis such as the traveling band and cellular aggregation.  The former can be modelled by travelling wave solutions and it is out of the scope of our attention.  The survey paper \cite{Wz} by Z. Wang is a good reference for the travelling wave solutions of the chemotaxis models.  To model the cellular aggregation phenomenon, there are two well--established methodologies available in literature.  One approach, first proposed by Childress and Perkus \cite{CP} in 1981, is to show that the time--dependent solution blows up in finite or infinite time with the $L^\infty$--norm of the solutions goes to infinity, then the aggregation is simulated by one $\delta$--function or a combination of several $\delta$--functions.  See \cite{N} and the survey paper \cite{HP} for works in this direction.  The $\delta$--function is evidently connected to the modeling of cellular aggregations, however, it is necessary to point out that, the singularity of $\delta$--function brings difficulties in modeling the cellular aggregations analytically and numerically.

Another approach is to show that the time--dependent system admits global--in--time solutions that converge to bounded steady states.  Then positive steady states with nontrivial structures such as spikes or transition layers can be employed to model the aggregation phenomenon.  For works and results in this direction, see the papers of Lin et al. \cite{LNT}, Ni and Takagi \cite{NT}, \cite{NT2} on the existence of boundary spike of the least--energy solutions through variational method, and the method initiated and developed by Wang \cite{CKWW,W,WX} on the formation of spikes through global bifurcation theory and the Helly's compactness theorem.

In this paper, we study a system general than (\ref{13}) in the following form,
\begin{equation}\label{14}
\left\{
\begin{array}{ll}
u_t=(d_1u_x-\chi u \Phi_x(v))_x+\lambda-\mu u,& x \in (0,L),t>0,\\
v_t=d_2v_{xx}+1-\alpha v+\beta u, & x\in(0,L),t>0,\\
u_x(x,t)=v_x(x,t)=0,&x=0,L,t>0,\\
u(x,0)=u_0(x),v(x,0)=v_0(x),&x \in (0,L),
\end{array}
\right.
\end{equation}
where $d_1$, $d_2$, $\mu, \lambda$ and $\alpha$, $\beta$ are positive constants.  For $d_1=\alpha=\beta=1$, $d_2=\mu=0$, and $\Phi=\ln v$, we see that (\ref{14}) the first equations are reduced to (\ref{13}).  Before formulating our main results, we want to mention that the density dependent sensitivity function $\Phi(v)$ plays an essential role in the global existence of classical solutions of (\ref{14}).  Two prototypical choices of the sensitivity functions are the linear case $\Phi=v$ and logarithmic case $\Phi(v)=\ln v$, and both are of biological importance and mathematical interest is the logarithmic function $\Phi(v)=\ln v$.  This form is chosen to model the stimulus perception governed by the celebrated Weber--Fecher's law.  Moreover, the singularity seems to be essential in the formation of travelling bands in \cite{KS}.  See the discussions in the last section of \cite{LW}.

For the linear case, one can prove by slightly modifying the arguments in \cite{OY} that, given any initial data $(u_0,v_0)\in H^1(0,L)\times H^1(0,L)$, (\ref{14}) has a unique bounded classical solution $(u,v)$.  The logarithmic case is much more complicated and difficult.  For $\lambda=\mu=0$, the global existence of the multi--dimensional parabolic--elliptic counterpart of (\ref{14}) has been studied in \cite{FWY}, \cite{STW}, \cite{Wk}, etc, provided that $\frac{\chi}{d_1}$ is not large.  Blow--up of radial solutions are investigated by Nagai and Senba in \cite{NS}.  For $\lambda,\mu>0$, it follows from Corollary 2.5 of \cite{STW} that the solution of (\ref{14}) is global in time, provided that $\frac{\chi}{d_1}<0$.  See the survey paper \cite{HP} of Hillen and Painter for works on chemotaxis models with other sensitivity functions.

It is an interesting and also important question to pursue if blow--up occurs when the smallness assumption on $\frac{\chi}{d_1}$ is relaxed.  The literature on global existence suggests that a singularity can be inhibited by the linear degradation of the cellular population, however, whether or not this is sufficient to exclude the blow--up for large $\frac{\chi}{d_1}$ remains open so far, in particular for high space dimensions.  Moreover, the singularity of the logarithmic sensitivity function at $v=0$ tends to support the formation of blow--up solutions.  See \cite{FWY}, \cite{Wk} and the references therein for detailed discussions in this aspect.

In contrast to the global existence question, it the main purpose of our paper to study the existence and stability of nonconstant positive stationary solutions of (\ref{11}).  In particular, we are concerned with the effect of the sensitivity function on the formation of nontrivial patterns.  To elucidate our goal, and also for the simplicity of our analysis, we introduce the parameters
\[\tilde d_1=\frac{d_1}{\mu},\tilde d_2=\frac{\mu+\beta\lambda}{\alpha \mu}d_2,\tilde \chi=\frac{\chi(\mu+\beta\lambda)}{\alpha\mu^2},\tilde \lambda=\frac{\lambda}{\mu},\]
and the transformation
\[(u,v)=\Big(\tilde u,\frac{\mu+\beta\lambda}{\alpha \mu} \tilde v\Big).\]
Then we see that the steady state of (\ref{11}) is reduced into
\begin{equation}\label{15}
\left\{
\begin{array}{ll}
(d_1u'-\chi u\Phi'(v)v')'+\lambda - u=0,&x \in (0,L),\\
d_2v''+1-(1+\lambda )v+u=0,& x\in(0,L),\\
u'(x)=v'(x)=0,&x=0,L,
\end{array}
\right.
\end{equation}
where we have dropped the tildes in (\ref{15}).  It is easy to see that $(\bar u,\bar v)=(\lambda,1)$ is the unique trivial solution of (\ref{15}).  We shall consider (\ref{11}) and its stationary system (\ref{15}) throughout the rest part of our paper.

This paper is organized as follows.  In section \ref{sec2}, we show that the homogeneous steady state $(\bar u,\bar v)$ is the global attractor of (\ref{11}) when $\chi=0$, independent of the initial data and the size of diffusions.  We also perform the linearized stability analysis of $(\bar u,\bar v)=(\lambda,1)$ when the chemoattraction rate $\chi>0$.  It is shown that large $\chi$ tends to destabilize $(\bar u,\bar v)$.  In section \ref{sec3}, we establish the existence of nonconstant positive steady states of (\ref{15})--see Theorem \ref{theorem31}.  The stability of the nonconstant solutions is also obtained for both linear and logarithmic sensitivity function--see Theorem \ref{theorem33} and Theorem \ref{theorem34}.  We present in section \ref{sec4} some numerical simulations to illustrate the emergence of nontrivial patterns.  Solutions with striking structures such as interior spikes, boundary layers, etc. are also presented.  Finally, we include some concluding remarks and discussions in section \ref{sec5}.

\section{Preliminary results and advection--driven instability}\label{sec2}

First of all, we show that the existence of nonconstant positive steady states of system (\ref{11}) is induced by the presence of chemotactic effect.  To this end, we study the dynamics of the system without the chemotactic term, i.e., the following system with $\chi=0$,
\begin{equation}\label{21}
\left\{
\begin{array}{ll}
u_t=d_1u_{xx}+\lambda- u,& x \in (0,L),t>0,\\
v_t=d_2v_{xx}+1-(1+\lambda) v+u, & x\in(0,L),t>0,\\
u_x(x,t)=v_x(x,t)=0,&x=0,L,t>0,\\
u(x,0)=u_0(x) \geq 0,~v(x,0)=v_0(x) \geq 0, &x\in (0,L).\\
\end{array}
\right.
\end{equation}
We have the following global stability result of the trivial steady state $(\bar u,\bar v)=(\lambda,1)$ and the proof of this result is same as that of Proposition 2.2 in \cite{MOW}.
\begin{proposition}\label{proposition1}
The positive equilibrium $(\bar u,\bar v)$ of (\ref{21}) is asymptotically stable and system (\ref{21}) does not have any nonconstant steady state.
\end{proposition}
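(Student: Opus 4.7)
The plan is to exploit the fact that, with $\chi=0$, the $u$-equation in (\ref{21}) is linear and completely decoupled from $v$, so I can analyze the two components in turn. Setting $w=u-\lambda$, I would multiply the equation $w_t=d_1 w_{xx}-w$ by $w$, integrate over $(0,L)$ and use the Neumann boundary condition to obtain
\[
\frac{1}{2}\frac{d}{dt}\|w\|_{L^2}^2 \;=\; -d_1\|w_x\|_{L^2}^2 - \|w\|_{L^2}^2 \;\le\; -\|w\|_{L^2}^2,
\]
so that $\|u(\cdot,t)-\lambda\|_{L^2}^2 \le \|u_0-\lambda\|_{L^2}^2\, e^{-2t}$. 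Parabolic smoothing (applied to a scalar linear equation) then upgrades this to uniform convergence $u(\cdot,t)\to\lambda$ on $[0,L]$.

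Next I would treat the $v$-equation as a linear equation with forcing. Setting $z=v-1$, I get
\[
z_t \;=\; d_2 z_{xx} - (1+\lambda)\, z + w,
\]
still with homogeneous Neumann data. Multiplying by $z$, integrating, and applying Young's inequality to the cross term $\int_0^L wz\,dx$ yields an estimate of the form
\[
\frac{d}{dt}\|z\|_{L^2}^2 \;\le\; -(1+2\lambda)\|z\|_{L^2}^2 + \|w\|_{L^2}^2.
\]
Since the forcing $\|w\|_{L^2}^2$ decays exponentially by the first step, a standard Gronwall/comparison argument gives $\|z(\cdot,t)\|_{L^2}^2\to 0$ as $t\to\infty$, and parabolic regularity again lifts this to $v(\cdot,t)\to 1$ uniformly. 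Combining the two steps delivers the asymptotic stability of $(\bar u,\bar v)=(\lambda,1)$.

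For the nonexistence of nonconstant steady states, I would rerun the same energy trick at the stationary level. Multiplying $d_1 u''+\lambda-u=0$ by $u-\lambda$, integrating by parts on $(0,L)$, and using the Neumann condition gives
\[
d_1\int_0^L (u')^2\,dx + \int_0^L (u-\lambda)^2\,dx \;=\; 0,
\]
forcing $u\equiv\lambda$. Substituting into the stationary $v$-equation reduces it to $d_2 v''-(1+\lambda)(v-1)=0$ with Neumann data; multiplying by $v-1$ and integrating yields $v\equiv 1$ in exactly the same way. Hence $(\lambda,1)$ is the unique steady state.

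I do not expect a genuine obstacle in this argument, as the crucial feature is that the coupling of $u$ into $v$ is linear and one-directional once $\chi=0$; the whole proof reduces to two successive energy estimates. If any subtlety arises it is the passage from $L^2$ decay to uniform (or $H^1$) convergence, needed to phrase asymptotic stability in the natural topology; this is standard once the exponential $L^2$ bound is in hand, via parabolic Schauder or $L^p$ estimates applied to the linear equations satisfied by $w$ and $z$.
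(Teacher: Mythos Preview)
Your argument is correct and in fact more self-contained than the paper's. The paper proceeds quite differently: it first invokes a general nonexistence criterion (Theorem~3.1 of \cite{LN}) based on the sign of the determinant of the kinetic Jacobian to rule out nonconstant steady states, then computes the eigenvalues of the linearization at each Fourier mode to obtain \emph{local} asymptotic stability, and finally appeals to strongly monotone semi-flow theory (as in \cite{MOW,S}) to upgrade this to global asymptotic stability. Your approach instead exploits directly the triangular structure once $\chi=0$: the $u$-equation is decoupled and linear, so a single energy identity forces $u\to\lambda$ exponentially in $L^2$, and then the $v$-equation becomes a linear equation with an exponentially decaying source, handled by another energy estimate plus Gronwall. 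The same integration-by-parts trick at the stationary level replaces the citation of \cite{LN}. What you gain is an elementary, fully explicit proof with quantitative decay rates and no reliance on monotone dynamical systems machinery or external nonexistence theorems; what the paper's route buys is brevity via black boxes, and its framework would still apply in situations where the equations are genuinely coupled (no triangular structure) but the kinetics remain cooperative.
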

\begin{proof}
First of all, the corresponding linearized system of (\ref{21}) at $(\bar u,\bar v)$ has the Jacobian matrix of the following form
\begin{equation*}
\mathcal{M}=\begin{pmatrix}
-1 & 0 \\
1 &-(1+\lambda)
\end{pmatrix},
\end{equation*}
which has a positive determinant $\vert \mathcal{M}\vert=1+\lambda>0$. Then $(\bar u,\bar v)$ is the only steady state of (\ref{21}) according to Theorem 3.1 of \cite{LN}.

On the other hand, it follows from straightforward calculations that the linearized matrix corresponding to system (\ref{21}) at $(\bar u,\bar v)$ is
\begin{equation*}
\begin{pmatrix}
-d_1\big(\frac{k\pi}{L}\big)^2-1  & 0 \\
1& -d_2\big(\frac{k\pi}{L}\big)^2-1-\lambda
\end{pmatrix},
\end{equation*}
and it has two negative eigenvalues
\[\eta_1=-d_1\Big(\frac{k\pi}{L}\Big)^2-1<0,~\eta_2=-d_2\Big(\frac{k\pi}{L}\Big)^2-1-\lambda<0,\]
therefore, $(\bar u,\bar v)$ is locally stable.   Moreover, similar to the analysis in \cite{MOW} or \cite{S}, one can show that system (\ref{21}) generates a strongly monotone semi--flow on $C([0,L],\mathbb{R}^2)$ with respect to $\{(u,v)\in C([0,L],\mathbb{R}^2) \vert u>0,v>0 \}$, hence $(\bar u,\bar v)$ is globally asymptotically stable.
\end{proof}

It follows from Proposition \ref{proposition1} that the diffusions alone does not change the dynamics of (\ref{21}) and no Turing's pattern emerges from this diffusive system.  Unlike diffusions, chemotaxis has the effect of destabilizing constant homogeneous solutions.  To elucidate this effect, we study the stability of $(\bar u,\bar v)$ with respect to (\ref{11}).  Let $(u,v)$ be any positive solution of (\ref{11}) and we put $(u,v)=(\bar u,\bar v)+(U,V)$, where $U$ and $V$ are spatially heterogeneous perturbations away from $(\bar u,\bar v)$, then one can easily have that
\begin{equation*}
\left\{
\begin{array}{ll}
U_t\approx d_1 U_{xx}- \chi \bar u \Phi'(\bar v)V_{xx}- U, &x\in (0,L),t>0,\\
V_t\approx d_2 V_{xx}+U-(1+\lambda)V ,&x \in(0,L),t>0,\\
U_x(x,t)=V_x(x,t)=0,&x=0,L,t>0,
\end{array}
\right.
\end{equation*}
By the standard linearized stability analysis--see Theorem 8.6 in \cite{Si} for example, the stability of $(\bar u,\bar v)$ with respect to (\ref{11}) can be determined by the eigenvalues of the linearized stability matrix.  To be precise, we have the following result.
\begin{proposition}\label{proposition2}
The constant solution $(\bar u,\bar v)$ of (\ref{15}) is unstable if and only if
\begin{equation}\label{22}
\chi>\chi_0=\min_{k\in \mathbb{N^{+}}} \frac{(d_1(\frac{k\pi}{L})^2+1)(d_2(\frac{k\pi}{L})^2+1+\lambda)}{ \bar u\Phi'( \bar v)(\frac{k\pi}{L})^2}.
\end{equation}
\end{proposition}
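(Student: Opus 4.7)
The plan is to perform the standard linearized spectral analysis for the displayed linear system and reduce the stability of $(\bar u,\bar v)$ to a family of $2\times 2$ eigenvalue problems indexed by the Neumann--Laplace modes.

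First I would fix the basis $\{\cos(k\pi x/L)\}_{k\in\mathbb{N}}$ of eigenfunctions of $-\partial_{xx}$ on $(0,L)$ with homogeneous Neumann boundary conditions, with eigenvalues $(k\pi/L)^2$. Writing the perturbation as
\[
\binom{U(x,t)}{V(x,t)}=\sum_{k=0}^{\infty}e^{\sigma_k t}\binom{a_k}{b_k}\cos\bl k\pi x/L\br,
\]
and substituting into the linearized system stated just before the proposition, the stability of $(\bar u,\bar v)$ is controlled by the eigenvalues of the family of matrices
\[
\mathcal{M}_k=\begin{pmatrix} -d_1\bl k\pi/L\br^2-1 & \chi\bar u\,\Phi'(\bar v)\bl k\pi/L\br^2 \\ 1 & -d_2\bl k\pi/L\br^2-(1+\lambda) \end{pmatrix},\qquad k\in\mathbb{N}.
\]
By standard linearized stability theory (e.g.\ Theorem 8.6 of \cite{Si}, already cited), $(\bar u,\bar v)$ is stable if and only if every $\mathcal{M}_k$ has spectrum in the open left half--plane, and is unstable as soon as some $\mathcal{M}_k$ has an eigenvalue with positive real part.

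Next I would reduce this to checking sign conditions on the trace and determinant. A direct computation gives
\[
\mathrm{tr}(\mathcal{M}_k)=-(d_1+d_2)\bl k\pi/L\br^2-(2+\lambda)<0\qquad\text{for all }k\ge 0,
\]
so the trace is always negative; hence instability can occur only through $\det(\mathcal{M}_k)<0$. The determinant is
\[
\det(\mathcal{M}_k)=\bl d_1\bl k\pi/L\br^2+1\br\bl d_2\bl k\pi/L\br^2+1+\lambda\br-\chi\bar u\,\Phi'(\bar v)\bl k\pi/L\br^2.
\]
For $k=0$ this simplifies to $1+\lambda>0$, independently of $\chi$, so the zero--mode is always stable; this handles the subtle case that the denominator in (\ref{22}) vanishes at $k=0$, and justifies restricting the minimum to $k\in\mathbb{N}^{+}$. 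For $k\ge 1$, the condition $\det(\mathcal{M}_k)<0$ is equivalent to
\[
\chi>\frac{\bl d_1\bl k\pi/L\br^2+1\br\bl d_2\bl k\pi/L\br^2+1+\lambda\br}{\bar u\,\Phi'(\bar v)\bl k\pi/L\br^2}.
\]

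Finally, instability of $(\bar u,\bar v)$ is equivalent to the existence of at least one $k\ge 1$ for which the above inequality holds, which is exactly $\chi>\chi_0$ as defined in (\ref{22}); conversely, $\chi\le\chi_0$ ensures $\det(\mathcal{M}_k)\ge 0$ for every $k\ge 1$, and together with the negative trace and the $k=0$ case this gives spectral (hence linearized) stability. I would close with the remark that the minimum in (\ref{22}) is actually attained: the expression inside the minimum behaves like $d_1 d_2(k\pi/L)^2$ as $k\to\infty$ and like $C/k^2$ as $k\to 0^+$ along the integers, so only finitely many $k$ need to be tested. The only mildly delicate point is the treatment of the $k=0$ mode and the verification that the trace is negative for all modes, which excludes a Hopf--type bifurcation at $\chi=\chi_0$ and ensures that the instability threshold is governed entirely by $\det(\mathcal{M}_k)$; everything else is a bookkeeping calculation.
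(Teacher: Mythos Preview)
Your proposal is correct and follows essentially the same approach as the paper: linearize at $(\bar u,\bar v)$, decompose into Neumann cosine modes, and analyze the resulting $2\times2$ stability matrices via their trace and determinant. Your version is somewhat more thorough than the paper's (you explicitly treat the $k=0$ mode, remark on why the negative trace excludes Hopf bifurcation, and note that the minimum in (\ref{22}) is attained), but the argument is the same.
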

\begin{proof}
The $k$--th eigenvalue of $-\frac{d^2}{dx^2}$ under homogeneous Neumann boundary condition is $\lambda_k=\big(\frac{k\pi}{L}\big)^2$ and the stability matrix of $(\bar u,\bar v)$ with respect to (\ref{11}) is
\begin{equation}\label{23}
J_k=
\begin{pmatrix}
-d_1\big(\frac{k\pi}{L}\big)^2-1 & \chi  \bar u \Phi'( \bar v)\big(\frac{k\pi}{L}\big)^2 \\
1 &-d_2 \big(\frac{k\pi}{L}\big)^2 -1-\lambda
\end{pmatrix}.
\end{equation}
The characteristic polynomial of (\ref{23}) takes the form
\[\mathcal{P}(r)=r^2+\text{Tra}r+\text{Det},\]
where
\[\text{Tra}=\Big(d_1+d_2\Big)\Big(\frac{k\pi}{L}\Big)^2+2+\lambda,\]
\[\text{Det}=\Big(d_1\big(\frac{k\pi}{L}\big)^2+1\Big)\Big(d_2(\frac{k\pi}{L}\big)^2+1+\lambda\Big)-\chi\bar u\Phi'(\bar v)
\big(\frac{k\pi}{L}\big)^2.\]
Then we can see that $\mathcal{P} (r)$ has one positive root if and only if $\text{Det}<0$ and (\ref{22}) readily follows through simple calculations.  This completely finishes the proof of this proposition.
\end{proof}
One quick implication of the above linearized stability analysis is that, chemo--attraction ($\chi>0$) has the effect of destabilizing the constant steady state, while chemo--repulsion ($\chi<0$) tends to support the constant steady state.  Moreover, as we shall see in the coming analysis, nonconstant positive solutions of (\ref{15}) emerges from $(\bar u,\bar v)$ as $\chi$ surpasses $\chi_0$.  It is clear that the existence of such nontrivial solutions is due to the presence of large chemotactic coefficient (or advection rate) $\chi$ and we refer this as advection--driven instability of ($\bar u,\bar v$).

\section{Existence and stability of nonconstant positive steady states}\label{sec3}

In this section, we study the stationary system (\ref{15}) and investigate its nonconstant positive solutions.  For this purpose, we shall apply the Crandall--Rabinowitz bifurcation theories \cite{CR} on (\ref{15}).  To begin with, we write (\ref{15}) into the following abstract form
\[\mathcal{F}(u,v,\chi)=0, (u,v,\chi)\in \mathcal{X} \times \mathcal{X} \times \mathbb{R}^{+},\]
where the operator is given by
\begin{equation}\label{31}
\mathcal{F}(u,v,\chi)=
\begin{pmatrix}
(d_1u'-\chi u\Phi'(v)v')'+\lambda - u \\
d_2v''+1-(1+\lambda)v+u
\end{pmatrix},
\end{equation}
and the Hilbert spaces are
\[\mathcal{X}=H_N^2(0,L)=\{ w \in H^2(0,L) \vert w'(0)=w'(L)=0\},\]
and $\mathcal{Y}=L^2(0,L)$.  It is easy to see that $\mathcal{F}$ is a continuously differentiable mapping from $\mathcal{X}\times \mathcal{X}\times \mathbb{R}$ to $\mathcal{Y} \times \mathcal{Y}$ and $\mathcal{F}(\bar u,\bar v,\chi)=0$ for any $\chi \in \mathbb{R}$, and it follows from straightforward calculations, for any fixed $(\hat{u},\hat{v}) \in \mathcal{X} \times \mathcal{X}$, that the Fr$\acute{\text{e}}$chet derivative of $\mathcal{F}$ is given by
\begin{align}\label{32}
& D_{(u,v)}\mathcal{F}(\hat{u},\hat{v},\chi)(u,v) \nonumber\\
= &\begin{pmatrix}
d_1u''-\chi\big(u\Phi'(\hat{v})v'_1+\hat{u}\Phi'(\hat{v})v' + \hat{u}\Phi''(\hat{v})v'_1v\big)'- u \\
d_2v''-(1+\lambda)v+u
\end{pmatrix},
\end{align}
moreover, the derivative $D_{(u,v)}\mathcal{F}(\hat{u},\hat{v},\chi):\mathcal{X}\times \mathcal{X}\times \mathbb{R}\rightarrow \mathcal{Y} \times \mathcal{Y}$ is a Fredholm operator with zero index.  To see this, we denote $\textbf{u}=(u,v)^\text{T}$ and write (\ref{38}) into
\[D_{(u,v)}\mathcal{F}(\hat{u},\hat{v},\chi)(u,v)=\textbf{A}_0 \textbf{u}''+\textbf{F}_0(x,\textbf{u},  \textbf{u}'),\]
where
\[\textbf{A}_0=\begin{pmatrix}
d_1 & -\chi \hat{u} \Phi'(\hat{v}) \\
0 &  d_2
\end{pmatrix}\]
and
\[\textbf{F}_0=\begin{pmatrix}
-\chi(u\Phi'(\hat{v})v'_1+ \hat{u}\Phi''(\hat{v})v'_1v)'-\chi(\hat{u}\Phi'(\hat{v}))'v'\\
-(1+\lambda)v u
\end{pmatrix}.\]  It is obvious that operator (\ref{32}) is strictly elliptic since $d_1, d_2>0$; moreover it satisfies the Agmon's condition according to Remark 2.5 of case 2 with $N=1$ in Shi and Wang \cite{SW}.  Therefore, $D_{(u,v)}\mathcal{F}(\hat{u},\hat{v},\chi)$ is a Fredholm operator with zero index thanks to Theorem 3.3 and Remark 3.4 of \cite{SW}.

\subsection{Existence of nonconstant positive steady states}
We now seek the existence of nonconstant positive solutions of (\ref{15}) by applying the local bifurcation theory of Crandall--Rabinowtiz from \cite{CR}.  We shall take the attraction--rate $\chi$ as the bifurcation parameter.  First of all, we find the potential values.  If the bifurcation occurs at $(\bar u,\bar v,\chi)=(\lambda,1,\chi)$, we need the implicit function theorem to fail at this equilibrium, i.e., the following necessary condition is satisfied,
\[\mathcal{N}(D_{(u,v)}\mathcal{F}(\bar u ,\bar v,\chi) )\neq 0,\]
where $\mathcal{N}$ denotes the null space.  Taking $(\hat{u},\hat{v})=(\bar u,\bar v)$ in (\ref{32}), one has that
\begin{equation*}
D_{(u,v)}\mathcal{F}(\bar u,\bar v,\chi)(u,v)=
\begin{pmatrix}
d_1u''-\chi \bar u\Phi'(\bar v)v''- u \\
d_2v''-(1+\lambda)v+u
\end{pmatrix},
\end{equation*}
then the null space above consists of solutions to the following system,
\begin{equation}\label{33}
\left\{
\begin{array}{ll}
d_1u''-\chi \bar u\Phi'(\bar v)v'' - u=0, &x\in(0,L),\\
d_2v''-(1+\lambda)v+u=0, &x\in(0,L),\\
u'(x)=v'(x)=0, &x=0,L.
\end{array}
\right.
\end{equation}
To prove the necessary condition, we let $(u,v)$ be a solution of (\ref{33}) and expand it as
\[u(x)=\sum_{k=0}^{\infty} \bar u_k, v(x)=\sum_{k=0}^{\infty}  \bar v_k ,\]
where
\[ \bar u_k=T_k\cos\frac{k\pi x}{L} , \bar v_k=S_k\cos\frac{k\pi x}{L} ,\]
and $T_k$, $S_k$ are some constants to be determined.  Substituting the series into the (\ref{33}), we have that $(T_k,S_k)$ satisfies
\begin{equation}\label{34}
\begin{pmatrix}
-d_1(\frac{k\pi}{L})^2-1 & \chi  \bar u \Phi'( \bar v)(\frac{k\pi}{L})^2 \\
1 &-d_2(\frac{k\pi}{L})^2-1-\lambda
\end{pmatrix}
\begin{pmatrix}
T_k\\
S_k
\end{pmatrix}
=\begin{pmatrix}
0\\
0
\end{pmatrix},
\end{equation}
Since $(u(x),v(x))$ is nontrivial, we must have that (\ref{34}) has at least one nonzero solution, therefore the coefficient matrix must be singular and
\begin{equation*}
\begin{vmatrix}
-d_1\big(\frac{k\pi}{L}\big)^2-1 & \chi  \bar u \Phi'( \bar v)\big(\frac{k\pi}{L}\big)^2 \\
1 &-d_2\big(\frac{k\pi}{L}\big)^2-1-\lambda
\end{vmatrix}
=0.
\end{equation*}
It is easy to see that $k=0$ can be easily ruled out. For each $k\in\mathbb{N}^+$, we obtain from straightforward calculations the following bifurcation values $\chi$, which we shall denote by $\chi_k$ from now on
\begin{equation}\label{35}
 \chi_k=\frac{(d_1(\frac{k\pi}{L})^2+1)(d_2(\frac{k\pi}{L})^2+1+\lambda)}{ \bar u\Phi'( \bar v)(\frac{k\pi}{L})^2}, k\in\mathbb{N}^{+};
\end{equation}
moreover, dim $\mathcal{N}\big(D(u,v) \mathcal{F}( \bar u, \bar v,\chi_k)\big)=1$ and $\mathcal{N}(D_{(u,v)} \mathcal{F}( \bar u, \bar v,\chi_k))=\text{span}\{( \bar u_k, \bar v_k )\}$, where
\begin{equation}\label{36}
( \bar u_k, \bar v_k )=\Big(Q_k \cos\frac{k\pi x}{L}, \cos\frac{k\pi x}{L}\Big),
\end{equation}
and
\begin{equation}\label{37}
Q_k=d_2\Big(\frac{k\pi}{L}\Big)^2+1+\lambda, k\in \mathbb{N}^{+}.
\end{equation}
We now present the first main result of this paper in the following theorem which asserts that the local bifurcation does occur at $(\lambda,1,\chi_k)$.
\begin{theorem}\label{theorem31}
Let $d_1$, $d_2$, and $\lambda$ be positive constants and assume that $\Phi(v)$ is $C^2$--smooth with $\Phi(v)>0$ for $v>0$.  Suppose that
\begin{equation}\label{38}
d_1d_2 j^2 k^2 \Big(\frac{\pi}{L}\Big)^4\neq \lambda+1 \text{  for all positive integers k}\neq j.
\end{equation}
Then for any positive integer $k\in \mathbb{N}^{+}$, there exist a constant $\delta >0$ such that (\ref{15}) admits nonconstant positive bifurcating solutions $(u_k(s,x),v_k(s,x),\chi_k(s))$ for $s\in(-\delta,\delta)$, where $\chi_k(s)$ is a continues function of $s$ and $(u_k(s,x),v_k(s,x)) \in \mathcal{X} \times \mathcal{X}$; moreover, the bifurcation branch $\Gamma_k(s)$ around $( \bar u, \bar v,\chi_k)$ can be parameterized as
\[\chi_k(s)=\chi_k+O(s),(u_k(s,x),v_k(s,x))=(\bar u, \bar v)+s(Q_k,1)\cos\frac{k\pi x}{L}+s(\xi(s),\zeta(s)),\]
where $(\xi(s),\zeta(s))$ is an element in the closed complement $\mathcal{Z}$ of $\mathcal{N}(D_{(u,v)}\mathcal{F}(\bar u,\bar v,\chi_k))$ in $ \mathcal{X} \times \mathcal{X}$ with $(\xi_k(0),\zeta_k(0))=(0,0)$, where
\begin{equation}\label{39}
\mathcal{Z}=\big\{(u,v)\in \mathcal{X}\times \mathcal{X} \vert \int_0^L u\bar u_k+v\bar v_kdx=0\big\};
\end{equation}
furthermore, $(u_k(s,x),v_k(s,x),\chi_k(s))$ solves system (1.2) and all nonconstant solutions of (\ref{15}) around $( \bar u, \bar v,\chi_k)$ must stay on the curve $\Gamma_k(s)$.
\end{theorem}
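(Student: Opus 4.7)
The plan is to apply the Crandall--Rabinowitz local bifurcation theorem to the abstract mapping $\mathcal{F}$ defined in (\ref{31}), treating $\chi$ as the bifurcation parameter and using the candidate bifurcation values $\chi_k$ from (\ref{35}). The excerpt has already done the bulk of the setup: it has verified that $\mathcal{F}:\mathcal{X}\times\mathcal{X}\times\mathbb{R}\to\mathcal{Y}\times\mathcal{Y}$ is continuously differentiable with $\mathcal{F}(\bar u,\bar v,\chi)=0$, that $D_{(u,v)}\mathcal{F}(\bar u,\bar v,\chi)$ is Fredholm of index zero via the Agmon/Shi--Wang framework, and that $\chi_k$ is precisely the parameter value for which the linearization annihilates the mode $\cos(k\pi x/L)$. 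So I will essentially need to verify two algebraic conditions on top of this setup.

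Step one is to show that the kernel of $D_{(u,v)}\mathcal{F}(\bar u,\bar v,\chi_k)$ is exactly one--dimensional, spanned by $(\bar u_k,\bar v_k)$. Using Fourier expansion of any kernel element in the cosine basis adapted to the Neumann boundary conditions, the mode $j$ amplitudes $(T_j,S_j)$ must lie in the kernel of the matrix appearing in (\ref{34}) with $\chi=\chi_k$; that matrix is singular precisely when $\chi_k=\chi_j$. A short calculation gives the identity
\[
\bigl(\chi_j-\chi_k\bigr)\,\bar u\Phi'(\bar v)\,j^2k^2\bigl(\tfrac{\pi}{L}\bigr)^2
\;=\;(k^2-j^2)\bigl[d_1d_2\,j^2k^2\bigl(\tfrac{\pi}{L}\bigr)^4-(1+\lambda)\bigr],
\]
so hypothesis (\ref{38}) guarantees $\chi_j\neq\chi_k$ for every $j\neq k$. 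Hence only the mode $k$ can survive, and the kernel is one--dimensional. Combined with the zero--Fredholm--index property, this automatically yields $\operatorname{codim}\mathcal{R}(D_{(u,v)}\mathcal{F}(\bar u,\bar v,\chi_k))=1$.

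Step two is the transversality condition. Differentiating (\ref{32}) in $\chi$ at $(\bar u,\bar v,\chi_k)$, and noting $\bar v'=0$, gives
\[
D_\chi D_{(u,v)}\mathcal{F}(\bar u,\bar v,\chi_k)\bigl[(\bar u_k,\bar v_k)\bigr]
\;=\;\begin{pmatrix} -\bar u\,\Phi'(\bar v)\,\bar v_k'' \\ 0\end{pmatrix}
\;=\;\begin{pmatrix} \bar u\,\Phi'(\bar v)\,\bigl(\tfrac{k\pi}{L}\bigr)^{2}\cos\tfrac{k\pi x}{L} \\[2pt] 0\end{pmatrix}.
\]
If this vector lay in $\mathcal{R}(D_{(u,v)}\mathcal{F}(\bar u,\bar v,\chi_k))$, there would exist $(u,v)\in\mathcal{X}\times\mathcal{X}$ solving the corresponding non--homogeneous ODE system; projecting onto mode $k$ and using $T_k=Q_kS_k$ from the second equation together with the definition (\ref{35}) of $\chi_k$, the first equation collapses to $0=\bar u\Phi'(\bar v)(k\pi/L)^2$, which is impossible since $\Phi'(\bar v)>0$. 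Thus the transversality hypothesis is verified.

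With the simple--eigenvalue and transversality conditions in hand, the Crandall--Rabinowitz theorem directly produces the curve $\Gamma_k(s)=(u_k(s,x),v_k(s,x),\chi_k(s))$ of nontrivial solutions through $(\bar u,\bar v,\chi_k)$, the stated parametrization with remainder $(\xi(s),\zeta(s))$ in the complement $\mathcal{Z}$ of (\ref{39}), and the local uniqueness assertion that all nonconstant solutions near $(\bar u,\bar v,\chi_k)$ lie on $\Gamma_k$. Positivity of $(u_k(s,\cdot),v_k(s,\cdot))$ for $|s|$ small follows because $(\bar u,\bar v)=(\lambda,1)>0$ and the perturbation is $O(s)$ in the $H^2$--topology, which embeds continuously into $C([0,L])$. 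The main technical obstacles I anticipate are (i) the algebraic verification that (\ref{38}) really is equivalent to the simplicity of $\chi_k$, which the identity displayed above settles, and (ii) organizing the transversality check as an explicit Fourier--mode solvability computation so that the role of the exact value of $\chi_k$ is transparent.
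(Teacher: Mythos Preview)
Your proposal is correct and follows essentially the same route as the paper: both verify the Crandall--Rabinowitz hypotheses by Fourier--mode analysis, using (\ref{38}) to ensure $\chi_j\neq\chi_k$ for $j\neq k$ (hence a one--dimensional kernel) and checking transversality by showing the $k$--th Fourier coefficient system is unsolvable. Your write--up is slightly more explicit about the algebraic identity linking (\ref{38}) to the simplicity of $\chi_k$ and adds the positivity argument via the $H^2\hookrightarrow C^0$ embedding, but the underlying strategy is identical.
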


\begin{proof}
We shall seek the existence of the bifurcating solutions by the Crandall--Rabinowitz local bifurcation theory in \cite{CR}.  To this end, we have verified all but the following transversality condition
\[\frac{d}{d\chi}D_{(u,v)}\mathcal{F}( \bar u, \bar v,\chi)( \bar u_k, \bar v_k) \vert_{\chi=\chi_k} \notin \mathcal{R}(D_{(u,v)} \mathcal{F}( \bar u, \bar v,\chi_k)),\]
where $ \mathcal{R}$ denotes the range and
\begin{equation*}
\frac{d}{d\chi}D_{(u,v)}\mathcal{F}( \bar u, \bar v,\chi)( \bar u_k, \bar v_k) \vert_{\chi=\chi_k} =
\begin{pmatrix}
-\bar{u}\Phi'( \bar v)  \bar v''_k \\
0
\end{pmatrix}.
\end{equation*}
To show the transversality condition, we argue by contradiction as follows.  Suppose that there exist a nontrivial solutions $(\tilde u,\tilde v)$ for which the transversality condition fails, i.e., $(\tilde u,\tilde v)$ satisfies
\begin{equation*}
\begin{pmatrix}
d_1 \tilde{u}''-\chi \bar u\Phi'( \bar v)\tilde{v}''- \tilde{u} \\
d_2 \tilde{v}''+\tilde{u}-(1+\lambda)\tilde{v}
\end{pmatrix}
=
\begin{pmatrix}
- \bar u \Phi'( \bar v)  \bar v''_k \\
0
\end{pmatrix}.
\end{equation*}
Substituting the following eigen--expansion of $\tilde{u}$ and $ \tilde{v}$
\[\tilde{u}=\sum_{k=0}^{\infty} \tilde{T}_k \cos\frac{k \pi x}{L} , \tilde{v}=\sum_{k=0}^{\infty} \tilde{S}_k \cos\frac{k \pi x}{L},\]
into system, we arrive at the following system
\begin{equation}\label{310}
\begin{pmatrix}
-d_1\big(\frac{k\pi}{L}\big)^2-1 & \chi  \bar u \Phi'( \bar v)\big(\frac{k\pi}{L}\big)^2 \\
1 &-d_2\big(\frac{k\pi}{L}\big)^2-1-\lambda
\end{pmatrix}
\begin{pmatrix}
\tilde{T}_k \\
\tilde{S}_k
\end{pmatrix}
=
\begin{pmatrix}
 \bar u \Phi'( \bar v) \big(\frac{k\pi}{L}\big)^2 S_k \\
0
\end{pmatrix}.
\end{equation}
Now we see that the coefficient matrix is singular in light of (\ref{37}), while the right hand side is nonzero.  This leads to a contradiction.  Therefore the transversality condition is verified and the statements of Theorem \ref{theorem31} follows from the Crandall--Rabinowitz bifurcation theory in \cite{CR}.

Finally, since $\chi_k \neq \chi_j $ for all positive integers $k \neq j$, i.e.,
\[\frac{(d_1\big(\frac{k\pi}{L}\big)^2+1)\big(d_2(\frac{k\pi}{L}\big)^2+1+\lambda)}{ \bar u\Phi'( \bar v)\big(\frac{k\pi}{L}\big)^2} \neq \frac{(d_1\big(\frac{j\pi}{L}\big)^2+1)(d_2\big(\frac{j\pi}{L}\big)^2+1+\lambda)}{ \bar u\Phi'( \bar v)\big(\frac{j\pi}{L}\big)^2},\]
then (\ref{38}) follows from simple calculations.
\end{proof}
We see from Proposition \ref{proposition2} and Theorem \ref{theorem31} that the bifurcation occurs at the exact location where the homogeneous steady state $(\bar u,\bar v)$ changes its stability, then nonconstant steady states emerges through bifurcations.  Apparently, this is due to the presence of the chemotaxis $\chi$.

\subsection{Stability analysis of the solution bifurcation from $(\bar u,\bar v,\chi_k)$.}
We shall now investigate the stability of the spatially inhomogeneous steady state $(u_k(s,x),v_k(s,x))$ established in Theorem \ref{theorem31}.  The stability or instability here refers to that of the inhomogeneous solution taken as an equilibrium to (\ref{11}).  To this end, we apply the classical results from Crandall and Rabinowitz \cite{CR2} on the linearized stability with an analysis of the spectrum of system (\ref{15}).

First of all, we shall show that the bifurcation curve $\Gamma_k(s)$ is of pitch--fork for each $k\in\mathbb{N}^+$.  Then we determine the direction in which $\Gamma_k(s)$ turns around $(\bar u,\bar v,\chi_k)$.  Let $\Phi(v)$ be $C^4$--smooth, then we have that $\mathcal{F}$ is $C^4$--smooth and $(u_k,v_k,\chi_k)$ are $C^3$--smooth functions of $s$, and we have the following asymptotic expansions,
\begin{equation}\label{311}
\left\{
\begin{array}{ll}
u_k(s,x)= \bar u+sQ_k\cos\frac{k\pi x}{L}+s^2\psi_1 (x)+s^3\psi_2 (x)+o(s^3), \\
v_k(s,x)= \bar v+s\cos\frac{k\pi x}{L}+s^2\varphi_1 (x)+s^3\varphi_2 (x)+o(s^3), \\
\chi_k(s)=\chi_k+\mathcal{K}_2s+\mathcal{K}_3s^2+o(s^2),
\end{array}
\right.
\end{equation}
where $Q_k$ is defined in (\ref{37}), $(\psi_i,\varphi_i) \in \mathcal{Z} $ for $i=1,2 $, and $\mathcal{K}_2$, $\mathcal{K}_3$ are some constants to be determined.  We want to mention that the $o(s^3)$ terms in $u_k(s,x)$ and $v_k(s,x)$ are taken in $H^2$--norms.  To evaluate $\mathcal{K}_2$ and $\mathcal{K}_3$, we need the following identities, which can be easily obtained from straightforward calculations,
\begin{align}\label{312}
&d_1u_k''(s,x)=-d_1\Big(\frac{k\pi}{L}\Big)^2 Q_k \cos\frac{k\pi x}{L}+d_1\psi''_1  s^2+d_1\psi''_2s^3+o(s^3),\\
&v'_k(s,x)=-\frac{k\pi }{L}\sin \frac{k\pi x}{L}s+\varphi'_1s^2+\varphi'_2s^3+o(s^3), \label{313}
\end{align}
and
\begin{align}\label{314}
& u_k(s,x)\Phi'(v_k(s,x))=\bar u\Phi'( \bar v)+\Big(\Phi'( \bar v)Q_k\cos\frac{k\pi x}{L}+ \bar u\Phi''( \bar v)\cos\frac{k\pi x}{L}\Big)s\\
&+\Big(\Phi'(\bar v)\psi_1+\bar u\Phi''(\bar v)\varphi_1+\big(\frac{1}{2}\bar u\Phi'''(\bar v)+\Phi''(\bar v)Q_k\big)\cos^2\frac{k\pi x}{L} \Big) s^2+o(s^3)\nonumber,
\end{align}
Substituting (\ref{311})-(\ref{314}) into the $u$--equation of (\ref{15}), we have from simple calculations that
\begin{align}\label{315}
& s{d_1\Big(\frac{k\pi}{L}\Big)^2Q_k\cos\frac{k\pi x}{L}-s^2d_1\psi''_1 -s^3d_1\psi''_2} \nonumber\\
=& -s\Big( Q_k\cos\frac{k\pi x}{L}-\chi_k P_0 (\frac{k\pi }{L})^2\cos\frac{k\pi x}{L} \Big)-     s^2\Big(\psi_1+\chi_k P_0\varphi''_1-\chi_k\frac{k\pi }{L}\big(P_1 \sin\frac{k\pi x}{L}\big)'\nonumber\\
&-\mathcal{K}_2 P_0( \frac{k\pi }{L})^2\cos\frac{k\pi x}{L} \Big)
-s^3\Big(  \psi_2+\chi_k P_0\varphi''_2+\chi_k(P_1\varphi'_1)'+\mathcal{K}_2 P_0\varphi''_1\nonumber\\
&-\mathcal{K}_2\frac{k\pi}{L}\big(P_1 \sin\frac{k\pi x}{L}\big)'-\mathcal{K}_3P_0( \frac{k\pi }{L} )^2 \cos\frac{k\pi x}{L} -\chi_k \frac{k\pi }{L}\big(P_2\sin\frac{k\pi x}{L} \big)'\Big).
\end{align}
where we have used the notations
\[P_0= \bar u\Phi'(\bar v),\text{  }P_1=\Phi'( \bar v)Q_k\cos\frac{k\pi x}{L}+ \bar u \Phi''(\bar v)\cos\frac{k\pi x}{L},\]and
\[P_2=\Phi'( \bar v)\psi_1+ \bar u \Phi''( \bar v)\varphi_1+\Big(\frac{1}{2} \bar u\Phi'''(\bar v)+\Phi''(\bar v)Q_k\Big)\cos^2\frac{k\pi x}{L}.\]

Equating the $s^2$--terms in (\ref{315}), we have that
\begin{align}\label{316}
& d_1\psi''_1 - \psi_1+(\frac{k\pi}{L})^2\mathcal{K}_2 P_0\cos\frac{k\pi x}{L}  \nonumber\\
=& \chi_k\Big( \bar u\Phi'( \bar v)\varphi''_1-\big(\frac{k\pi}{L}\big)^2(\Phi'( \bar v)Q_k+ \bar u\Phi''
 ( \bar v))\cos\frac{2k\pi x}{L}\Big).
\end{align}
Multiplying (\ref{316}) by $\cos\frac{k\pi x}{L}$ and then integrating it over $(0,L)$ by parts, we have that
\begin{align}\label{317}
& \Big(\frac{k\pi}{L}\Big)^2 \bar u\Phi'( \bar v)\mathcal{K}_2\int_0^L \cos^2 \frac{k\pi x}{L} dx  \nonumber\\
=&\Big(d_1\big(\frac{k\pi}{L}\big)^2+1\Big)\int_0^L \psi_1 \cos\frac{k\pi x}{L} dx-\Big(\frac{k\pi}{L}\Big)^2\chi_k \bar u \Phi'(\bar v )\int_0^L \varphi_1 \cos\frac{k\pi x}{L} dx.
\end{align}
Similarly, we can substitute (\ref{311})--(\ref{314}) into the $v$--equation of (\ref{35}) and collect that
\begin{align}\label{318}
& -sd_2\Big(\frac{k\pi}{L}\Big)^2\cos\frac{k\pi x}{L}+s^2d_2\varphi''_1+s^3d_2\varphi''_2+o(s^3) \nonumber\\
 =&s \Big((1+\lambda)\cos\frac{k\pi x}{L}-Q_k\cos\frac{k\pi x}{L}\Big)+s^2\big((1+\lambda)\varphi_1-\psi_1\big)\nonumber\\
 &+s^3\big((1+\lambda)\varphi_2-\psi_2\big)+o(s^3).
\end{align}
Equating the $s^2$--terms in (\ref{318}), we get that
\begin{equation}\label{319}
d_2\varphi''_1-(1+\lambda)\varphi_1+\psi_1=0.
\end{equation}
Multiplying (\ref{319}) by $\cos\frac{k\pi x}{L}$ and integrating it over $(0,L)$ by parts leads us to
\begin{equation}\label{320}
\Big(d_2\big(\frac{k\pi}{L}\big)^2+1+\lambda \Big)\int_0^L \varphi_1 \cos\frac{k\pi x}{L} dx- \int_0^L \psi_1 \cos\frac{k\pi x}{L} dx=0.
\end{equation}
On the other hand, since $(\psi_1,\varphi_1)\in \mathcal{Z}$, we have that
\begin{equation}\label{321}
Q_k\int_0^L \psi_1\cos\frac{k\pi x}{L}dx + \int_0^L \varphi_1\cos\frac{k\pi x}{L}dx=0.
\end{equation}
From (\ref{320}) and (\ref{321}), we arrive at the following system
\begin{equation}\label{322}
\begin{pmatrix}
-1 & d_2\big(\frac{k\pi}{L}\big)^2+1+\lambda \\
Q_k & 1
\end{pmatrix}
\begin{pmatrix}
\int_0^L \psi_1 \cos\frac{k\pi x}{L} dx \\
\int_0^L \varphi_1 \cos\frac{k\pi x}{L} dx
\end{pmatrix}
=
\begin{pmatrix}
0 \\
0
\end{pmatrix}.
\end{equation}
Since the coefficient matrix is not singular, we can easily have that
\begin{equation}\label{323}
\int_0^L \psi_1 \cos\frac{k\pi x}{L} dx =\int_0^L \varphi_1 \cos\frac{k\pi x}{L} dx =0,
\end{equation}
therefore $\mathcal{K}_2=0$ follows from (\ref{317}) and (\ref{323}).  Hence the bifurcation branch $\Gamma_k(s)$ around $(\bar u,\bar v,\chi_k)$ is of pitch--fork type, i.e., is of one--sided.

We continue with the stability analysis of the bifurcation solutions $(u_k(s,x),v_k(s,x))$ with $s\in (-\delta,\delta)$.  We linearize system (\ref{11}) at the bifurcation solution, then by the principle of exchange of stability--Theorem 1.16 in \cite{CR2}, the branch $\Gamma_k(s)$ will be asymptotically stable if the real part of any eigenvalue $\eta$ of the following problem is negative:
\begin{equation}\label{324}
D_{(u,v)}\mathcal{F}(u_k(s,x),v_k(s,x),\chi_k(s))(u,v)=\eta(u,v), (u,v)\in \mathcal{X} \times \mathcal{X}.
\end{equation}
Following the similar analysis in \cite{CR2}, or the proof of Theorem 5.5 in \cite{CKWW}, we have the following stability results.

\begin{proposition}\label{proposition3}
Suppose all conditions in Theorem \ref{theorem31} are satisfied.  For $s\in (-\delta,\delta)$, $s\neq0$, the bifurcating solution $(u_k(s,x),v_k(s,x))$ is asymptotically stable if $\mathcal{K}_3>0$ and unstable if $\mathcal{K}_3<0$.
\end{proposition}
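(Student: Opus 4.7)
The plan is to apply the Crandall--Rabinowitz principle of exchange of stability (Theorem 1.16 of \cite{CR2}) to the eigenvalue problem (\ref{324}) along $\Gamma_k(s)$. That theorem furnishes, after possibly shrinking $\delta$, two $C^1$ curves of perturbed eigenvalues: $\eta(s)$ of $D_{(u,v)}\mathcal{F}(u_k(s,\cdot),v_k(s,\cdot),\chi_k(s))$ with $\eta(0)=0$, and $\mu(\chi)$ of $D_{(u,v)}\mathcal{F}(\bar u,\bar v,\chi)$ with $\mu(\chi_k)=0$, related by
\[
\lim_{s\to 0}\frac{-s\,\dot\chi_k(s)\,\mu'(\chi_k)}{\eta(s)}=1.
\]
Since the branch has already been shown to be pitchfork ($\mathcal{K}_2=0$, so $\dot\chi_k(s)=2\mathcal{K}_3 s+o(s)$), this identity yields the two-term expansion $\eta(s)=-2\mathcal{K}_3\,\mu'(\chi_k)\,s^2+o(s^2)$, so the sign of $\eta(s)$ for small $s\neq 0$ is opposite to that of $\mathcal{K}_3\,\mu'(\chi_k)$.

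I would then pin down the sign of $\mu'(\chi_k)$. Because the Neumann cosine basis simultaneously diagonalizes $D_{(u,v)}\mathcal{F}(\bar u,\bar v,\chi)$ into the $2\times 2$ blocks $J_m$ of (\ref{23}), the curve $\mu(\chi)$ is simply the root of the characteristic polynomial of $J_k$ that vanishes at $\chi=\chi_k$. Implicit differentiation at $(\eta,\chi)=(0,\chi_k)$ gives
\[
\mu'(\chi_k)=\frac{\bar u\,\Phi'(\bar v)(k\pi/L)^2}{(d_1(k\pi/L)^2+1)+(d_2(k\pi/L)^2+1+\lambda)}>0,
\]
positivity using $\Phi'(\bar v)>0$, which is implicit in $\chi_k>0$ via (\ref{35}). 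Combined with the first paragraph, $\mathrm{sgn}\,\eta(s)=-\mathrm{sgn}\,\mathcal{K}_3$ for $0<|s|<\delta$, which is precisely the stated dichotomy.

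The remaining ingredient is to confirm that $\eta(s)$ actually controls the full spectrum of $D_{(u,v)}\mathcal{F}(u_k(s,\cdot),v_k(s,\cdot),\chi_k(s))$ along $\Gamma_k(s)$. At $s=0$ this operator is block-diagonal in the Fourier modes; every block $J_m$ has strictly negative trace $-((d_1+d_2)(m\pi/L)^2+2+\lambda)$, and the non-resonance assumption (\ref{38}) makes the zero eigenvalue algebraically simple. A standard spectral perturbation argument (paralleling the proof of Theorem 5.5 of \cite{CKWW}) then keeps all non-critical eigenvalues bounded away from the imaginary axis in the left half-plane along $\Gamma_k(s)$ for small $s$, at least when $k$ corresponds to the first Turing mode of (\ref{22}). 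I expect this spectral tracking to be the main technical obstacle: for $k\geq 2$ the lower Fourier modes $J_m$ with $\chi_m<\chi_k$ can already contribute unstable directions at $\chi=\chi_k$, so the asymptotic stability conclusion must then be interpreted in the restricted sense of \cite{CR2} relative to the principal eigenvalue $\eta(s)$ governed by $\mathcal{K}_3$.
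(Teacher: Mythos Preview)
Your proposal is correct and follows essentially the same route as the paper: both invoke Theorem~1.16 of \cite{CR2}, establish that the trivial--branch eigenvalue has positive $\chi$--derivative at $\chi_k$ (the paper writes $\dot\gamma(\chi_k)$ and obtains it by differentiating the PDE eigenvalue problem and projecting onto $\cos\frac{k\pi x}{L}$, whereas you read it directly from the characteristic polynomial of the block $J_k$), and then conclude $\mathrm{sgn}\,\eta(s)=-\mathrm{sgn}\,\mathcal{K}_3$ from $\chi_k'(s)=2\mathcal{K}_3 s+o(s)$. Your closing caveat about residual unstable lower modes when $k$ is not the minimal Turing index is a valid point that the paper's stability argument passes over.
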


\begin{proof}
We already know that $D_{(u,v)}\mathcal{F}(\bar u,\bar v,\chi_k):\mathcal{X}\times \mathcal{X}\times \mathbb{R} \rightarrow \mathcal{Y}\times \mathcal{Y}$ is a Fredholm operator with zero index and its null--space is of one--dimensional.  Moreover, similar to the analysis that leads to the transversality condition in Theorem \ref{theorem31}, we can show that $(u_k(s,x),v_k(s,x))\not \in \mathcal{R}(D_{(u,v)}\mathcal{F}(\bar u,\bar v,\chi_k))$.  Therefore, $\eta=0$ is a simple eigenvalue of $D_{(u,v)}\mathcal{F}(\bar{u},\bar{v},\chi_k)$ with eigenspace $\mathcal{N}\big(D_{(u,v)}\mathcal{F}(\bar{u},\bar{v},\chi_k)\big)=\{(Q_k,1)\cos \frac{k\pi x}{L}\}$.

Thanks to Corollary 1.13 in \cite{CR2}, there exist an interval $I$ and continuously differentiable functions $(\chi,s):I\times (-\delta,\delta) \rightarrow (\gamma(\chi),\eta(s))$ with $\chi_k\in I$ and $\gamma(\chi_k)=\eta(0)=0$ such that, $\gamma(\chi)$ is an eigenvalue of the following eigenvalue problem
\begin{equation}\label{325}
D_{(u,v)}\mathcal{F}(\bar{u},\bar{v},\chi)(u,v)=\gamma(u,v),~(u,v)\in \mathcal{X} \times \mathcal{X},
\end{equation}
and $\eta(s)$ is an eigenvalue of (\ref{324}).  Moreover, $\eta(s)$ is the only eigenvalue of (\ref{324}) in any fixed neighbourhood of the origin of the complex plane and the same assertion can be made about $\gamma(\chi)$ around $\gamma(\chi_k)$.  We also know from \cite{CR2} that the eigenfunction of (\ref{325}) are continuously differentiable and can be represented by $\big(u(\chi,x),v(\chi,x)\big)$, which is uniquely determined by $\big(u(\chi_k,x),v(\chi_k,x)\big)=\big( Q_k \cos \frac{k\pi x}{L},\cos \frac{k\pi x}{L} \big)$ and $\big(u(\chi,x),v(\chi,x)\big)-\big( Q_k\cos \frac{k\pi x}{L},\cos \frac{k\pi x}{L} \big) \in \mathcal{Z}$, where $Q_k$ and $\mathcal{Z}$ are defined in (\ref{37}) and (\ref{39}) respectively.  Moreover, it follows from (\ref{39}) that (\ref{325}) is equivalent to
\begin{equation}\label{326}
\left\{
\begin{array}{ll}
d_1 u''-\chi \bar{u}\Phi'(\bar{v})v''-u=\gamma u,& x\in(0,L),  \\
d_2 v''-(1+\lambda)v+u=\gamma v,& x\in(0,L),  \\
u'(x)=v'(x)=0,&x=0,L.
\end{array}
\right.
\end{equation}
Differentiating (\ref{326}) with respect to $\chi$ and then taking $\chi=\chi_k$, we have that
\begin{equation}\label{327}
\left\{\!\!\!
\begin{array}{ll}
d_1\dot{u}''\!\!-\!\bar{u}\Phi'(\bar{v})\big(\cos\frac{k\pi x}{L}\big)''\!\!-\chi_k\bar{u}\Phi'(\bar{v})\dot{v}''\!\!-\!\dot{u}=\dot{\gamma}(\chi_k) Q_k \cos\frac{k\pi x}{L}, &x\in(0,L), \\
d_2 \dot{v}''\!\!-(1+\lambda)\dot{v}+\dot{u}=\dot{\gamma}(\chi_k) \cos\frac{k\pi x}{L},&x\in(0,L),\\
\dot{u}'(x)=\dot{v}'(x)=0,&x=0,L,
\end{array}
\right.
\end{equation}
where we use the notations in (\ref{327}) $\dot{u}=\frac{\partial u(\chi,x)}{\partial \chi}\big\vert_{\chi=\chi_k}$, $\dot{v}=\frac{\partial v(\chi,x)}{\partial \chi}\big\vert_{\chi=\chi_k}$ and $''$ denotes the derivative taken with respect to $x$.

Multiplying both equations in (\ref{327}) by $\cos\frac{k \pi x}{L}$ and integrating them over $(0,L)$ by parts, we arrive at the following system
\[\begin{pmatrix}\!\!
-d_1 \!\big(\frac{k \pi}{L}\big)^2\!\!-\!1 & \chi_k \!\bar{u}\Phi'(\bar{v})\big(\frac{k \pi}{L}\big)^2   \\
~~\\
1 & -d_2\big(\frac{k \pi}{L}\big)^2\!\!-\!1-\!\lambda
\!\!\end{pmatrix}\!
\!\!\begin{pmatrix}\!
\int_0^L \!\!\dot{u}\cos\frac{k\pi x}{L}dx \\
~~\\
\int_0^L\!\! \dot{v}\cos\frac{k\pi x}{L}dx
\end{pmatrix}\!\!=\!\!\begin{pmatrix}
\!\Big(\dot{\gamma}(\chi_k)Q_k\!-\!\bar{u}\Phi'(\bar{v})\big(\frac{k\pi}{L} \big)^2\Big)\! \frac{L}{2}\\
~~\\
\dot{\gamma}(\chi_k)\frac{L}{2}
\!\!\!\!\end{pmatrix}.
\]
The coefficient matrix is singular due to (\ref{35}), therefore we must have that
\[ \frac{\dot{\gamma}(\chi_k)Q_k-\bar{u}\Phi'(\bar{v}) (\frac{k\pi}{L})^2}{\dot{\gamma}(\chi_k)}=
-\Big(d_1 \big(\frac{k \pi}{L}\big)^2+1\Big)\]
and this implies that
\[\dot{\gamma}(\chi_k)=\frac{\bar{u}\Phi'(\bar{v})\big(\frac{k\pi}{L} \big)^2}{d_1\big(\frac{k\pi}{L} \big)^2+Q_k}>0.\]
By Theorem 1.16 in \cite{CR2}, the functions $\eta(s)$ and $-s\chi'_k(s)\dot{\gamma}(\chi_k)$ have the same zeros and the same signs near $s=0$; moreover, for $\eta(s) \neq0$, one has that
\[\lim_{s\rightarrow 0}\frac{-s\chi'_k(s)\dot{\gamma}(\chi_k)}{\eta(s)}=1,\]
therefore we conclude that $\text{sgn} (-\eta(s))=\text{sgn}(\mathcal{K}_3)$ since $\mathcal{K}_2=0$.

If $\mathcal{K}_3<0$, the instability statements quickly follow from the positive sign of $\eta(s)$.  To show the stability part, we observe, for $s\in(-\delta,\delta)$, that (\ref{325}) has no nonzero eigenvalues with non--positive real parts if $\mathcal{K}_2>0$.  Then it follows from the standard perturbation theory that all eigenvalues of (\ref{324}) have no positive real part in a small neighborhood of the origin of the complex plane.  This completes the proof of Proposition \ref{proposition3}.
\end{proof}

We now proceed to find the sign of $\mathcal{K}_3$ to determine the turning direction and the stability of $\Gamma_k(s)$ around $(\bar u, \bar v,\chi_k)$.  For this purpose, we collect the $s^3$--terms of (\ref{315}) and (\ref{318}) and obtain the following system in light of $\mathcal{K}_2=0$,
\begin{equation}\label{328}
\left\{
\begin{array}{ll}
d_1\psi''_2-\psi_2-\chi_k\big(P_0\varphi'_2+P_1\varphi'_1-P_2(\frac{k\pi}{L})\sin\frac{k\pi x}{L})'+P_0\mathcal{K}_3\big(\frac{k\pi}{L}\big)^2\cos\frac{k\pi x}{L}=0,& x\in(0,L),\\
d_2\varphi''_2-(1+\lambda)\varphi_2+\psi_2=0,& x\in(0,L),\\
\psi'_2(x)=\varphi'_2=0 ,&x=0,L.
\end{array}
\right.
\end{equation}
where we have employed the notations $P_0$, $P_1$ and $P_2$ as in (\ref{315}).
Following the same calculations that lead to (\ref{323}), we can show that
\begin{equation}\label{329}
\int_0^L \psi_2\cos\frac{k\pi x}{L} dx=\int_0^L \varphi_2\cos\frac{k\pi x}{L} dx=0.
\end{equation}

Now multiplying the first equation of (\ref{328}) by $\cos\frac{k\pi x}{L}$ and then integrating it over $(0,L)$, we have that
\begin{align}\label{330}
& d_1\int_0^L \psi''_2\cos\frac{k\pi x}{L} dx-\int_0^L \psi_2\cos\frac{k\pi x}{L} dx-\chi_k \bar u \Phi'(\bar v)\int_0^L \varphi''_2\cos\frac{k\pi x}{L} dx \nonumber\\
&+\chi_k\big(\Phi'(\bar v) Q_k+ \bar u \Phi''(\bar v)\big)\frac{k\pi}{L}\int_0^L \varphi'_1\sin\frac{k\pi x}{L} \cos\frac{k\pi x}{L} dx-\chi_k\big(\Phi'(\bar v) Q_k+\bar u \Phi''(\bar v)\big)\nonumber\\
&\cdot\int_0^L \varphi''_1 \cos^2\frac{k\pi x}{L} dx +\frac{k\pi}{L}\chi_k\Phi'(\bar v)\int_0^L \psi''_1 \sin\frac{k\pi x}{L} \cos\frac{k\pi x}{L} dx+\frac{k\pi}{L}\chi_k\bar u \Phi''(\bar v)\nonumber\\
&\cdot \int_0^L \varphi'_1\sin\frac{k\pi x}{L} \cos\frac{k\pi x}{L} dx-\chi_k\big(\bar u \Phi'''(\bar v)+2\Phi''(\bar v)Q_k\big)\Big(\frac{k\pi}{L}\Big)^2\int_0^L \sin^2\frac{k\pi x}{L}\cos^2\frac{k\pi x}{L} dx\nonumber\\
&+\chi_k\Big(\frac{k\pi}{L}\Big)^2\Phi'(\bar v) \int_0^L \psi_1 \cos^2\frac{k\pi x}{L} dx
 +\chi_k\Big(\frac{k\pi}{L}\Big)^2\bar u \Phi''(\bar v) \int_0^L \varphi_1 \cos^2\frac{k\pi x}{L}dx \nonumber\\   &+\chi_k\Big(\frac{k\pi}{L}\Big)^2\Big(\frac{1}{2}\bar u \Phi'''+\Phi''(\bar v)Q_k\Big)\int_0^L \cos^4 \frac{k\pi x}{L} dx
 +\bar u \Phi'(\bar v)\Big(\frac{k\pi}{L}\Big)^2\mathcal{K}_3\int_0^L \cos^2\frac{k\pi x}{L} dx\nonumber\\
 &=0.
\end{align}
Putting (\ref{329}) and (\ref{330}) together, we obtain that
\begin{align}\label{331}
\frac{\bar u \Phi'(\bar v)}{2\chi_k}\mathcal{K}_3=&-\frac{\Phi'(\bar v) Q_k+\frac{1}{2}\bar u \Phi''(\bar v)}{L}\int_0^L \varphi_1\cos\frac{2k\pi x}{L}dx\nonumber\\
&+\frac{\Phi'(\bar v)}{2L}\int_0^L \psi_1 \cos\frac{2k\pi x}{L} dx-\frac{\bar u \Phi''(\bar v)}{2L}\int_0^L \varphi_1dx\\
&-\frac{\Phi'(\bar v)}{2L}\int_0^L \psi_1dx
-\frac{1}{8}\Big(\frac{1}{2}\bar u \Phi'''(\bar v)+\Phi''(\bar v)Q_k\Big). \nonumber
\end{align}
To find $\mathcal{K}_3$, we now proceed to evaluate the following integrals $\int_0^L \varphi_1\cos\frac{2k\pi x}{L}dx$, $\int_0^L \psi_1\cos\frac{2k\pi x}{L}dx$, $\int_0^L \varphi_1 dx$ and $\int_0^L \psi_1 dx$.  To obtain the first two integrals, we multiply (\ref{316}) and (\ref{319}) by $\cos\frac{2k\pi x}{L}$ and integrate them over $(0,L)$, then it follows from simple calculations that
\begin{align}\label{332}
&-\Big(d_1(\frac{2k\pi }{L})^2+1\Big)\int_0^L \psi_1\cos\frac{2k\pi x}{L}dx+\Big(\frac{2k\pi}{L}\Big)^2\chi_k\bar u \Phi'(\bar v)\int_0^L \varphi_1\cos\frac{2k\pi x}{L}dx \nonumber\\
=&-\frac{k^2\pi^2}{2L}\chi_k(\Phi'(\bar v) Q_k+\bar u \Phi''(\bar v)).
\end{align}
and
\begin{equation}\label{333}
\int_0^L \psi_1\cos\frac{2k\pi x}{L}dx-\Big(d_2\big(\frac{2k\pi}{L}\big)^2+1+\lambda\Big)\int_0^L \varphi_1\cos\frac{2k\pi x}{L}dx=0.
\end{equation}
Putting $\chi_k$ in (\ref{35}) into (\ref{332}), we solve (\ref{332}) and (\ref{333}) to obtain that
\begin{equation}\label{334}
\int_0^L \psi_1\cos\frac{2k\pi x}{L}dx=\frac{\frac{Q_kL}{2\bar u \Phi'(\bar v)}\big(d_1(\frac{k\pi}{L})^2+1\big)\big(\Phi'(\bar v) Q_k+\bar u \Phi''(\bar v)\big)\big(d_2(\frac{2k\pi}{L})^2+1+\lambda\big)}{12d_1d_2(\frac{k\pi}{L})^4-3(1+\lambda)},
\end{equation}
and
\begin{equation}\label{335}
\int_0^L \varphi_1\cos\frac{2k\pi x}{L}dx=\frac{\frac{Q_kL}{2\bar u \Phi'(\bar v)}\big(d_1\big(\frac{k\pi}{L}\big)^2+1\big)\big(\Phi'(\bar v)Q_k+\bar u \Phi''(\bar v)\big)}{12d_1d_2(\frac{k\pi}{L})^4-3(1+\lambda)}.
\end{equation}
We can see that the denominator in (\ref{334}) and (\ref{335}) is nonzero by taking $j=2k$ in (\ref{38}).

Finally, by integrating (\ref{316}) and (\ref{319}) over $(0,L)$, we have from simple calculations that
\begin{equation}\label{336}
\int_0^L \psi_1 dx=\int_0^L \varphi_1 dx=0.
\end{equation}
In light of (\ref{334})--(\ref{336}), we conclude from (\ref{331}) that
\begin{align}\label{337}
\frac{\bar u \Phi'(\bar v)}{2\chi_k}\mathcal{K}_3=&\frac{\frac{Q_k}{2\bar u \Phi'(\bar v)}(d_1(\frac{k\pi}{L})^2+1)(\Phi'(\bar v) Q_k+\bar u \Phi''(\bar v))(-\frac{1}{2}\bar u \Phi''(\bar v)+\Phi'(\bar v) Q_k-\frac{3}{2}\Phi'(\bar v)(1+\lambda))}{12d_1d_2(\frac{k\pi}{L})^4-3(1+\lambda)}\nonumber\\
& -\frac{1}{8}\Big(\frac{1}{2}
\bar u \Phi'''(\bar v)+\Phi''(\bar v)Q_k\Big).
\end{align}

It appears that $\mathcal{K}_3$ in (\ref{337}) is extremely complicated and it is very difficult to determine its sign for general $\Phi(v)$.  To elucidate the effect of the sensitivity function on the stability of the bifurcating solutions, and for the simplicity of our calculations, we shall divide our discussions into two cases, where $\Phi(v)=v$ and $\Phi(v)=\ln v$, respectively.
\subsection{System with linear sensitivity function}
In this section, we study the stability of the bifurcating solutions established in Theorem \ref{theorem31} for $\Phi(v)=v$.  Substituting $\Phi(\bar v)=\bar v$, $\Phi'(\bar v)=1$ and $\Phi''(\bar v)=0$ into (\ref{337}), we can easily get that
\begin{equation}\label{338}
\frac{\bar{u}\Phi'(\bar v)}{2\chi_k}\mathcal{K}_3=\frac{\big(Q_k-\frac{3}{2}(1+\lambda)\big)
(d_1(\frac{k\pi}{L})^2+1\big)\frac{Q_k^2}{2\bar{u}}}{12d_1d_2(\frac{k\pi}{L})^4-3(1+\lambda)}.
\end{equation}
To evaluate the sign of $\mathcal{K}_3$, we first recognize that $(d_1(\frac{k\pi}{L})^2+1)\frac{Q_k^2}{2\bar{u}}>0$ in (\ref{338}); moreover, we can substitute $Q_k=d_2(\frac{k\pi}{L})^2+1+\lambda$ into (\ref{338}) and then collect that
\begin{equation}\label{339}
\mathop{\rm sgn}\mathcal{K}_3=\mathop{\rm sgn} \Big(\frac{1+\lambda-2d_2(\frac{k\pi}{L})^2}{1+\lambda-4d_1d_2(\frac{k\pi}{L})^4}\Big).
\end{equation}
We are ready to present the following results that characterize of the sign of $\mathcal{K}_3$ hence the stability of the bifurcating solutions.
\begin{theorem}\label{theorem33}
Suppose that the conditions in Theorem \ref{theorem31} hold.  The bifurcation curve $\Gamma_k(s)$ of around $(\bar{u},\bar{v},\chi_k)$ turns to the right if $\mathcal{K}_3>0 $ and to the left if $\mathcal{K}_3<0$.  Moreover, we have the following results on the sign of $\mathcal{K}_3$:

(i).  if $d_1= \frac{1}{2}(\frac{L}{k\pi})^2$, then $\mathcal{K}_3>0$ for all $d_2\in(0,\infty)$;

(ii).  if $d_1\in(0,\frac{1}{2}(\frac{L}{k\pi})^2)$, then $\mathcal{K}_3>0$ for $d_2\in(0,\frac{\lambda+1}{4d_1}(\frac{L}{k\pi})^4)\cup (\frac{\lambda+1}{2}(\frac{L}{k\pi})^2,\infty) $ and $\mathcal{K}_3<0$ for $d_2\in(\frac{\lambda+1}{4d_1}(\frac{L}{k\pi})^4,\frac{\lambda+1}{2}(\frac{L}{k\pi})^2)$;

(iii).  if $d_1\in(\frac{1}{2}(\frac{L}{k\pi})^2,\infty)$, then $\mathcal{K}_3>0$ for $d_2\in(0,\frac{\lambda+1}{2}(\frac{L}{k\pi})^2)\cup (\frac{\lambda+1}{4d_1}(\frac{L}{k\pi})^4,\infty) $ and $\mathcal{K}_3<0$ for $d_2\in(\frac{\lambda+1}{2}(\frac{L}{k\pi})^2, \frac{\lambda+1}{4d_1}(\frac{L}{k\pi})^4)$.
\end{theorem}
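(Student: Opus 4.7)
The plan is to leverage the formula (3.39) already derived above, so that Theorem \ref{theorem33} reduces to a careful sign analysis in the parameter plane $(d_1,d_2)$. Since $\mathcal{K}_2=0$ has already been established in the derivation preceding Proposition \ref{proposition3}, the pitchfork expansion $\chi_k(s)=\chi_k+\mathcal{K}_3 s^2+o(s^2)$ shows immediately that $\Gamma_k(s)$ lies to the right of $\chi_k$ (i.e., $\chi_k(s)>\chi_k$ for small $s\neq 0$) precisely when $\mathcal{K}_3>0$, and to the left when $\mathcal{K}_3<0$. Thus the turning-direction assertion is automatic, and the real content of the theorem is the explicit parameter ranges in (i)--(iii).

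From (3.39) one has $\mathop{\rm sgn}\mathcal{K}_3=\mathop{\rm sgn}(N/D)$, where
\[
N(d_2) := 1+\lambda - 2d_2\Big(\frac{k\pi}{L}\Big)^2,\qquad D(d_1,d_2) := 1+\lambda - 4d_1 d_2\Big(\frac{k\pi}{L}\Big)^4.
\]
I would therefore introduce the two critical thresholds at which $N$ and $D$ vanish,
\[
d_2^{N} = \frac{1+\lambda}{2}\Big(\frac{L}{k\pi}\Big)^2,\qquad d_2^{D} = \frac{1+\lambda}{4d_1}\Big(\frac{L}{k\pi}\Big)^4,
\]
and compare them via the ratio $d_2^{D}/d_2^{N} = L^2/\bigl(2d_1(k\pi)^2\bigr)$. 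This ratio is greater than, equal to, or less than $1$ exactly when $d_1$ is less than, equal to, or greater than $\tfrac{1}{2}(L/(k\pi))^2$, which naturally partitions the argument into three cases.

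The case split is then mechanical. In case (i), $d_1=\tfrac{1}{2}(L/(k\pi))^2$ gives $4d_1(k\pi/L)^4=2(k\pi/L)^2$, so $N\equiv D$ as functions of $d_2$; hence $N/D\equiv 1$ wherever it is defined, and $\mathcal{K}_3>0$ on all of $(0,\infty)$. In case (ii), $d_1<\tfrac{1}{2}(L/(k\pi))^2$ forces $d_2^{N}<d_2^{D}$, so $N$ and $D$ are both positive on $(0,d_2^{N})$, have opposite signs on $(d_2^{N},d_2^{D})$, and are both negative on $(d_2^{D},\infty)$; the prescribed pattern of $\mathop{\rm sgn}\mathcal{K}_3$ follows. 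Case (iii) is the mirror image: $d_1>\tfrac{1}{2}(L/(k\pi))^2$ reverses the threshold order to $d_2^{D}<d_2^{N}$, and reading off the signs of $N/D$ in the corresponding three intervals yields the stated result.

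I do not anticipate a real obstacle, since (3.39) already packages all of the analytic content (the integral computations leading to (3.34)--(3.36) and the simplifications when $\Phi'(\bar v)=1$, $\Phi''(\bar v)=\Phi'''(\bar v)=0$). The only point worth flagging is that the denominators in (3.34)--(3.35) require the hypothesis (3.38) with $j=2k$, which is precisely the condition already imposed in Theorem \ref{theorem31}; this guarantees that the formula (3.38) (and hence (3.39)) is well defined away from the measure-zero set where $d_2$ hits one of the critical thresholds $d_2^{N},d_2^{D}$, and these boundary values are naturally excluded from the three open regions described in (i)--(iii).
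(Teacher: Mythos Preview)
Your approach is essentially identical to the paper's: both take formula (3.39) as given, locate the zeros $d_2^{N}=\tfrac{\lambda+1}{2}\bigl(\tfrac{L}{k\pi}\bigr)^{2}$ and $d_2^{D}=\tfrac{\lambda+1}{4d_1}\bigl(\tfrac{L}{k\pi}\bigr)^{4}$ of numerator and denominator, and read off the sign of $\mathcal{K}_3$ on the three resulting $d_2$--intervals. The paper compresses this into the single line $\mathop{\rm sgn}\mathcal{K}_3=\mathop{\rm sgn}\bigl[(d_2-d_2^{N})(d_2-d_2^{D})\bigr]$ and then declares that ``(ii) and (iii) follow from simple calculations''; your version spells out the threshold comparison via the ratio $d_2^{D}/d_2^{N}=L^{2}/\bigl(2d_1(k\pi)^{2}\bigr)$, which is slightly more explicit but mathematically the same.

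One caveat worth flagging: your (correct) case (ii) analysis gives $d_2^{N}<d_2^{D}$ and hence $\mathcal{K}_3<0$ on the middle interval $(d_2^{N},d_2^{D})$, yet the theorem as printed writes the negative region as $\bigl(\tfrac{\lambda+1}{4d_1}(\tfrac{L}{k\pi})^{4},\,\tfrac{\lambda+1}{2}(\tfrac{L}{k\pi})^{2}\bigr)=(d_2^{D},d_2^{N})$, which is empty under that ordering; the analogous swap occurs in (iii). The endpoint labels in parts (ii) and (iii) of the stated theorem thus appear to be interchanged. The paper's own proof never writes the intervals out and so does not detect this, and your phrase ``the prescribed pattern of $\mathop{\rm sgn}\mathcal{K}_3$ follows'' likewise glosses over it. You should either note the discrepancy explicitly or record the corrected intervals that your sign analysis actually produces.
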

\begin{proof}
If $d_1= \frac{1}{2}(\frac{L}{k\pi})^2$, we readily have that $\mathcal{K}_3=1$, which implies part (\emph{i}).

If $d_1 \neq\frac{1}{2}(\frac{L}{k\pi})^2$, we have from (\ref{339}) that
\[\mathop{\rm sgn}\mathcal{K}_3=\mathop{\rm sgn}\Big(d_2-\frac{\lambda+1}{2}\big(\frac{L}{k\pi}\big)^2\Big)\Big(d_2-\frac{\lambda+1}{4d_1}\big(\frac{L}{k\pi}\big)^4 \Big),\]
then (\emph{ii}) and (\emph{iii}) follows from simple calculations.
\end{proof}
We see from Theorem \ref{theorem33} that both large and small values of $d_2$ lead to the stability of the bifurcating solutions $(u_k(s,x),v_k(s,x))$, however the intermediate value of $d_2$ tends to destabilize the bifurcating solutions.

\subsection{System with logarithmic sensitivity function}
Now we evaluate the sign of $\mathcal{K}_3$ for $\Phi(v)=\ln v$.  Since $\bar v=1$, we have that $\Phi'(\bar v)=1, \Phi''(\bar v)=-1$ and $\Phi'''(\bar v)=2$, then (\ref{337}) gives us that
\begin{equation}\label{340}
\mathop{\rm sgn} \mathcal{K}_3=\frac{(Q_k-\lambda)(AQ_k^2+BQ_k+C)}{12d_1\big(\frac{k\pi}{L}\big)^2 Q_k-(12d_1\big(\frac{k\pi}{L}\big)^2+3)(\lambda+1)},
\end{equation}
where we have used the notations
\[A=\frac{d_1\big(\frac{k\pi}{L}\big)^2+1}{2},B=\frac{\lambda}{4}\Big(7d_1\big(\frac{k\pi}{L}\big)^2+1\Big)
-\frac{3(\lambda+1)}{4}\Big(d_1\big(\frac{k\pi}{L}\big)^2+1\Big)
,\]
and \[C=-\frac{\lambda(\lambda+1)}{8}\Big(12d_1\big(\frac{k\pi}{L}\big)^2+3\Big).\]

On the other hand, we know that $Q_k=d_2(\frac{k\pi}{L})^2+1+\lambda>\lambda$ and the quadratic function $AQ_k^2+BQ_k+C$ always has two roots $Q_k^0<0<Q_k^1$ since it has a positive determinant
\begin{align}\label{341}
\Delta=&\Big(\frac{\lambda}{4}\Big(7d_1\big(\frac{k\pi}{L}\big)^2+1\Big)
-\frac{3(\lambda+1)}{4}\Big(d_1\big(\frac{k\pi}{L}\big)^2+1\Big) \Big)^2\nonumber\\
&+\frac{\lambda\Big(\lambda+1\Big)\Big(d_1\big(\frac{k\pi}{L}\big)^2+1\Big)\Big(12d_1(\frac{k\pi}{L})^2+3\Big)}{4}>0.
\end{align}
To be precise, we have that
\begin{equation}\label{342}
\hat Q_k=\frac{-B+\sqrt{\Delta}}{2A}.
\end{equation}
Denoting
\begin{equation}\label{343}
\tilde Q_k=\Big(\frac{L^2}{4d_1(k\pi)^2}+1\Big)({\lambda+1}),
\end{equation}
we conclude from (\ref{340}) that
\begin{equation}\label{344}
\mathop{\rm sgn} {\mathcal{K}_3}=\mathop{\rm sgn} \Big({\frac{Q_k-\hat Q_k}{Q_k-\tilde Q_k}}\Big).\end{equation}
In light of Proposition \ref{proposition3}, we are now ready to present the following stability results in light of $\mathcal{K}_3$.
\begin{theorem}\label{theorem34}
Let $(u_k(s,x),v_k(s,x))$ be the nonconstant positive steady state of (\ref{15}) and $\Gamma_k(s)$ be the bifurcation branch of around $(\bar u,\bar v,\chi_k)$.  Then $\Gamma_k(s)$ turns to the right and $(u_k(s,x),v_k(s,x))$ is asymptotically stable if $\mathcal{K}_3>0$; $\Gamma_k(s)$ turns to the left and $(u_k(s,x),v_k(s,x))$ is unstable if $\mathcal{K}_3<0$; moreover, we have following cases:\\
(i):   if $d_1= \frac{1+\lambda}{2}(\frac{L}{k\pi})^2$, then $\mathcal{K}_3>0$ for all $d_2\in(0,\infty)$;\\
(ii):  if $d_1\in(0,\frac{1+\lambda}{2}(\frac{L}{k\pi})^2)$, then $\mathcal{K}_3>0$ if $ d_2\in(0,d_2^*)\cup (d_2^{**},\infty)$ and $\mathcal{K}_3<0$ if $d_2\in(d_2^*,d_2^{**})$ ;\\
(iii): if $d_1\in(\frac{1+\lambda}{2}(\frac{L}{k\pi})^2,\infty)$, then $\mathcal{K}_3>0$ if $ d_2\in(0,d_2^{**})\cup (d_2^*,\infty)$ and $\mathcal{K}_3<0$ if $d_2\in(d_2^{**},d_2^*)$,
where
\begin{equation}\label{345}
d_2^*=\Big(\frac{3\lambda}{2(d_1(\frac{k\pi}{L})^2+1)}-\frac{1}{4}-2\lambda+\frac{\sqrt{\Delta}}
{d_1\big(\frac{k\pi}{L}\big)^2+1}\Big)(\frac{L}{k\pi})^2,
\end{equation}
and
\begin{equation}\label{346}
d_2^{**}=\frac{\lambda+1}{4d_1}\big(\frac{L}{k\pi}\big)^4.
\end{equation}
\end{theorem}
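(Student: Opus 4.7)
The plan is to apply Proposition \ref{proposition3} together with the identity (\ref{344}), which reduces the stability and turning-direction claims to determining the sign of $\mathcal{K}_3$; by (\ref{344}) this sign agrees with that of $(Q_k - \hat Q_k)(Q_k - \tilde Q_k)$, in which $Q_k = d_2(k\pi/L)^2 + 1 + \lambda$ is strictly increasing in $d_2$ (from $1+\lambda$ up to $\infty$), while $\hat Q_k$ and $\tilde Q_k$ are independent of $d_2$. Consequently, there exist unique thresholds $d_2^*$ and $d_2^{**}$ where $Q_k$ crosses $\hat Q_k$ and $\tilde Q_k$ respectively, and $\mathcal{K}_3>0$ precisely when $d_2$ lies outside the open interval with endpoints $d_2^*,d_2^{**}$. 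The three cases (i)--(iii) will then correspond exactly to the three possible orderings of these two thresholds.

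I would begin by deriving the explicit formulas (\ref{345}) and (\ref{346}). The formula for $d_2^{**}$ follows immediately from the linear equation $Q_k = \tilde Q_k$. For $d_2^*$, I would use $\hat Q_k = (-B+\sqrt{\Delta})/(2A)$ from (\ref{342}), substitute the definitions of $A$ and $B$, and simplify until the expression in (\ref{345}) emerges. To ensure both thresholds are positive, I would note that $\tilde Q_k > 1+\lambda$ is clear from its definition, and verify that $AQ^2+BQ+C<0$ at $Q=1+\lambda$ by a direct expansion; since the quadratic opens upward, this sandwich places $1+\lambda$ strictly between its two roots, forcing $\hat Q_k>1+\lambda$.

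The key step, and the principal technical obstacle, is comparing $\hat Q_k$ and $\tilde Q_k$ as $d_1$ varies. Because $AQ^2+BQ+C$ opens upward with $\hat Q_k$ its unique positive root, one has $\tilde Q_k>\hat Q_k$ iff $P(T):=A\tilde Q_k^2+B\tilde Q_k+C>0$, where $T=d_1(k\pi/L)^2$. Substituting the closed forms $A=(T+1)/2$, $B=[(4\lambda-3)T-(2\lambda+3)]/4$, $C=-3\lambda(\lambda+1)(4T+1)/8$ and $\tilde Q_k=(\lambda+1)(4T+1)/(4T)$, and reducing over the common denominator $32T^2$, I expect the numerator to collapse to the factored form $-(\lambda+1)(4T+1)(T+1)(2T-(\lambda+1))$. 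For $T>0$ the first three factors are strictly positive, so $\mathrm{sgn}\,P(T)=\mathrm{sgn}((\lambda+1)-2T)$. This yields case (i) ($T=(\lambda+1)/2$, where $\tilde Q_k=\hat Q_k$, $d_2^*=d_2^{**}$, and $\mathcal{K}_3>0$ throughout $(0,\infty)$), case (ii) ($T<(\lambda+1)/2$, where $\tilde Q_k>\hat Q_k$, hence $d_2^{**}>d_2^*$ and $\mathcal{K}_3<0$ on $(d_2^*,d_2^{**})$), and case (iii) (the mirror situation). The hard part is purely algebraic: achieving the clean factorization of the cubic numerator, without which the three cases would not align with the simple criterion $d_1\lessgtr\frac{\lambda+1}{2}(L/(k\pi))^2$ stated in the theorem.
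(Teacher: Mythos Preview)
Your proposal is correct and follows essentially the same route as the paper's proof: both reduce, via Proposition~\ref{proposition3} and (\ref{344}), to $\mathop{\rm sgn}\mathcal{K}_3=\mathop{\rm sgn}\big((d_2-d_2^*)/(d_2-d_2^{**})\big)$ and then compare the two thresholds. The paper merely asserts $\hat Q_k=\tilde Q_k$ at $d_1=\tfrac{1+\lambda}{2}(L/k\pi)^2$ and leaves the ordering in cases (ii)--(iii) to ``straightforward calculations''; your evaluation of $A\tilde Q_k^2+B\tilde Q_k+C$ and the factorization $-( \lambda+1)(4T+1)(T+1)(2T-(\lambda+1))/(32T^2)$ supplies exactly those omitted details and is correct.
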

\begin{remark}
$d_2^*$ defined in (\ref{345}) is always positive for all positive parameters since $\hat Q_k>1+\lambda$ in (\ref{342}).
\end{remark}

\begin{proof}
This follows through straightforward calculations thanks to Proposition \ref{proposition3}.
If $d_1= \frac{1+\lambda}{2}(\frac{L}{k\pi})^2$, we have that $\hat Q_k=\tilde Q_k$ and this implies that $\mathcal{K}_3>0$.

Dividing both numerator and denominator of $\frac{Q_k-\hat Q_k}{Q_k-\tilde Q_k}$ by $\big(\frac{k\pi}{L}\big)^2$, we have that
\[\mathop{\rm sgn} {\mathcal{K}_3}=\mathop{\rm sgn} \Big(\frac{d_2-d_2^*}{d_2-d_2^{**}}\Big),\]
where $d_2^*$ and $d_2^{**}$ are defined in (\ref{345}) and (\ref{346}).  Hence (\emph{ii}) and (\emph{iii}) follows from straightforward calculations.
\end{proof}

\section{Numerical simulations}\label{sec4}
In this section, we include some numerical results to illustrate the formation of patterns in (\ref{11}).  Our numerical simulations demonstrate that (\ref{11}) has positive solutions that converge to stable steady states with striking patterns such as boundary spikes and interior spikes.  In particular, we are concerned with the effect of system parameters on the formations of such patterns.  Spiky solutions can serve as realistic modelings of cell sorting or cellular concentration through chemotaxis.  Our numerics also show that small perturbation of initial data may lead to quite different structures of the steady states.

In Figure \ref{fig2} and Figure \ref{fig3}, we plot numerical simulations of the evolutions of cell densities and chemical concentration to the stable steady state that has large amplitude in contrast to the small amplitude bifurcating solutions.  We want to investigate the effect the parameters $d_1$, $d_2$ and $\chi$ on the magnitude of the spikes.  To elucidate our goals, we fix the parameters $\lambda=1$ and choose initial data to be the small perturbations from the homogeneous equilibrium $u_0=v_0=1+0.01\cos \pi x$.  The boundary spike corresponds to the phenomenon of cellular aggregation.  We see from Figure \ref{fig2} that the boundary layer is formed at $x=0$ and it quickly develops into a stable pattern.  Apparently, this structure is quite different from the small amplitude bifurcating solutions.  We see the development of multiple interior spikes in Figure \ref{fig3} by taking the domain size $L=10$.  Figure \ref{fig2} and Figure \ref{fig3} also demonstrate that the cell population density $u$ aggregates at the same location where the chemical concentration $v$ achieves its maximum.

\begin{figure}[h!]
\centering
\includegraphics[width=\textwidth,height=3in]{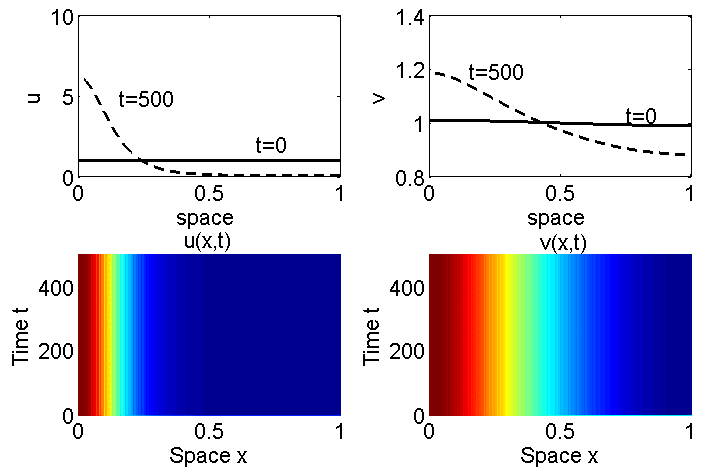}
  \caption{The formation of stable single boundary layer at $x=0$; the parameters are chosen to be $d_1=d_2=\lambda=1$ and $\chi=20$; the initial data are small perturbations from the homogeneous equilibrium $(u_0,v_0)=(\bar u,\bar v)+(0.01,0.01)\cos \pi x$.}\label{fig2}
\end{figure}

\begin{figure}[h!]
\centering
\includegraphics[width=\textwidth,height=3in]{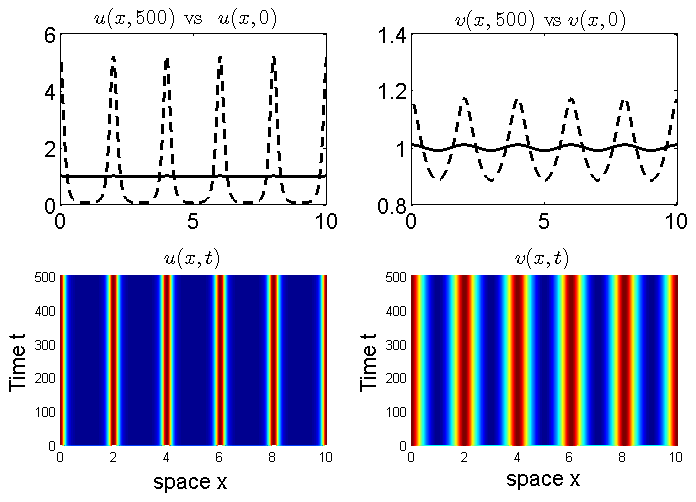}
  \caption{The formation of stable multi--spikes; all parameters and initial data are chosen to be the same as in Figure \ref{fig2} except that the domain size $L=10$.  $u(x,500)$ and $v(x,500)$ are plotted in dashed lines and the initial data $u_0$, $v_0$ in solid lines.}\label{fig3}
\end{figure}

The set of numerical solutions in Figure \ref{fig4} is included to test the effect of diffusion rates $d_1$, $d_2$ and chemoattraction rate $\chi$ on the formation of spiky solution.  Figure \ref{fig4} (a) and (b) devote to assert that small cellular and chemical diffusion rates support stable spiky solutions.  Moreover, small $d_2$ can lead to solutions with double boundary spikes when $\chi$ is not too small.  Rigorous analysis towards this was carried out on a quite similar chemotaxis model without cellular growth in \cite{W},\cite{WX} and the references therein.  According to Proposition \ref{proposition2}, the homogeneous equilibrium $(\bar u,\bar v)$ loses its stability as $\chi$ surpasses $\chi_0$, hence nonconstant positive stable steady states emerge in light of principle of exchange of stabilities.  This is numerically supported by Fig \ref{fig4} (c).  Actually, we can also perform solutions to see that $(\bar u,\bar v)$ is the global attractor of (\ref{11}) if $\chi$ is small, as shown in Proposition \ref{proposition1}.  Our numerical solutions also demonstrate that large $\frac{\chi}{d_1}$ drives the emergence of spiky solutions.

small $d_2$ can
\begin{figure}
        \centering
        \begin{subfigure}[b]{0.8\textwidth}
                \includegraphics[width=\textwidth,height=1.4in]{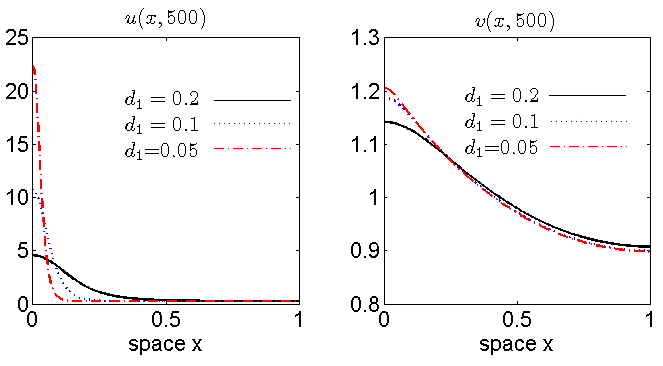}
                \caption*{(a). Stable single--boundary spikes for the cellular diffusivity $d_1$ being small.  $d_2=\lambda=1$ and $\chi=5$}
                \label{fig4a}
        \end{subfigure}\\
        \begin{subfigure}[b]{0.8\textwidth}
                \includegraphics[width=\textwidth,height=1.4in]{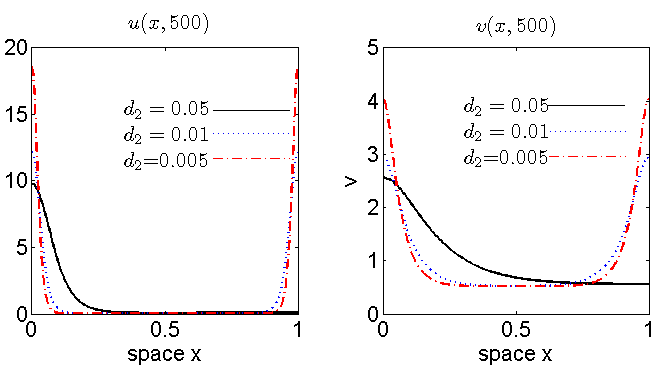}
                \caption*{(b). Stable double--boundary spikes for the chemical diffusion rate $d_2$ being small.  $d_1=\lambda=1$ and $\chi=5$}
                \label{fig4b}
        \end{subfigure}        \\
        \begin{subfigure}[b]{0.8\textwidth}
                \includegraphics[width=\textwidth,height=1.4in]{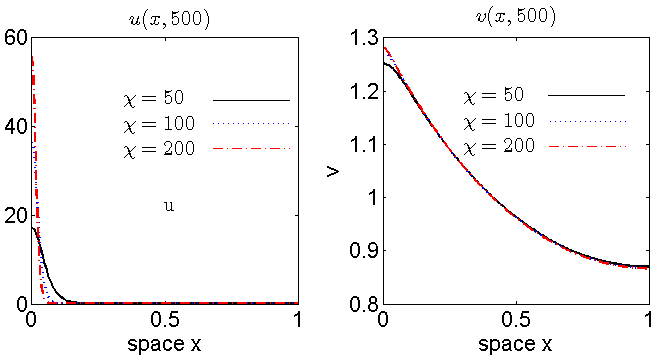}
                \caption*{(c).  Stable single--boundary spikes for the chemoattraction rate $\chi$ being small.  $d_1=d_2=\lambda=1$.}
                \label{fig4c}
        \end{subfigure}%
       \caption{Stable boundary spikes as the diffusion rates $d_1$, $d_2$ shrinks or the chemoattraction rate $\chi$ increases.  Initial data are small perturbations from the homogeneous equilibrium $(u_0,v_0)=(\bar u,\bar v)+(0.01,0.01)\cos \pi x$.  The numerics suggests that both large $\frac{\chi}{d_1}$ and small $d_2$ with properly chosen $\chi$ drive the formation of stable spikes.}\label{fig4}
\end{figure}

We present in the plot sets Figure \ref{fig5} and Figure \ref{fig6} the variations of pattern evolutions of (\ref{11}) as domain size $L$ changes.  The simulations in Figures \ref{fig5} suggest intervals with large size $L$ tend to increase the number of spikes or cell aggregates.  It is also observed that multi--spiky solutions undergoes a coarsening process, i.e., two interiors spikes merge into a single spike and new spike develops and eventually merges with another spike.  The instability of the interior spikes may be responsible for spike merging, however, in contrast to the classical Keller--Segel model where the interior spike is known to be unstable, some interior spikes are stabilized for some proper length $L$--see the stable multi--spike steady states for $L=5, 12$.  Moreover, rigourous analysis is required if one wants to find the critical threshold responsible for spike merging and stabilization.  See the analysis in section 5 of \cite{MOW} for the calculations and selection of wave modes for the single species chemotaxis model.

Figure \ref{fig4d} is included to illustrate the choice of sensitivity function on the structure of the spiky solutions.  To this end, we fix the paternosters $d_1=d_2=\lambda=L=1$ and then plot the stable steady states of (\ref{11}) with $\Phi=v$ and $\Phi=\ln v$ respectively.  The numerics in Figure \ref{fig4d} suggests that $\Phi =\ln v$ also has the saturation effect on the formation of spike height.

\begin{figure}[h!]
\centering
\includegraphics[width=3.2in,height=1.6in]{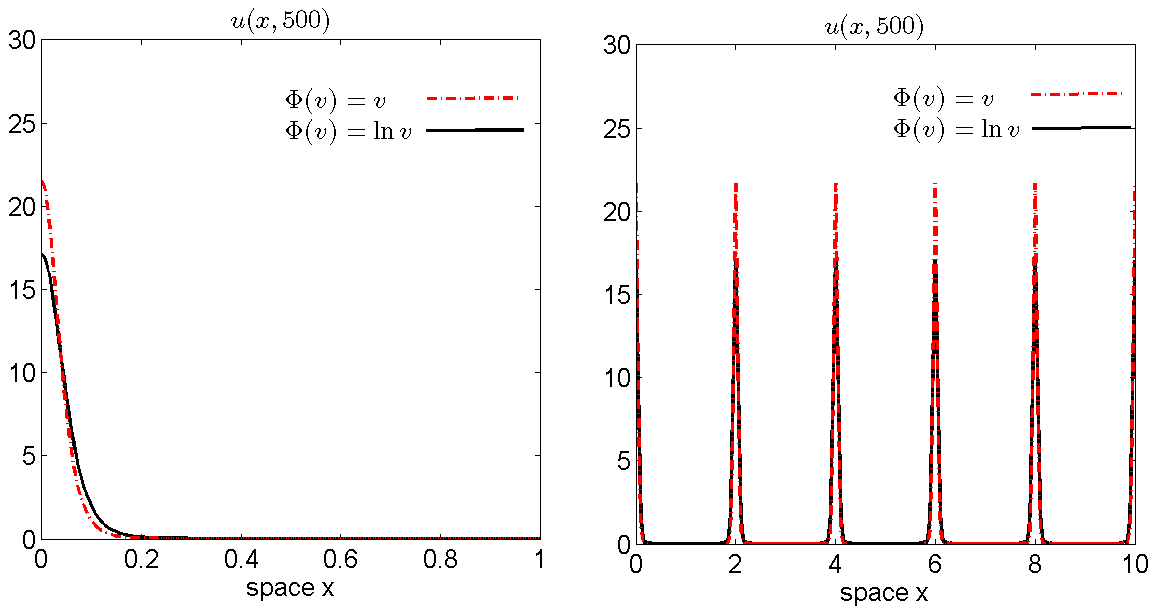}
  \caption{Spatial--temporal behaviors of $u(x,t)$ of (\ref{11}) over domains with different sizes $L$.  The parameters in the eight plots are taken to be $d_1=0.1$, $d_2=\lambda=1$, $\chi=5$; the interval of the left plot is $(0,1)$ and initial data are $u_0=2+0.01\cos 3\pi x$, $v_0=2+0.01\cos 3 \pi x$.  interval of the left plot is $(0,10)$ and initial data are $u_0=1+\cos 1.5\pi x$, $v_0=1+0.5*\cos \pi x$.  We observe that both $v$ and $\ln v$ as the sensitivity function supports spiky solutions, however, $\ln v$ tends to saturate spike height compared to $v$.}\label{fig4d}
\end{figure}

\begin{figure}[h!]
\centering
\includegraphics[width=\textwidth,height=3in]{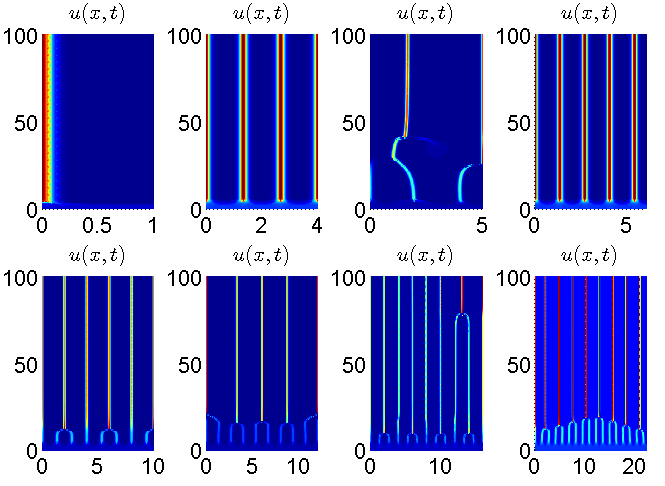}
  \caption{The formation, merging and emerging of multi--spikes; all parameters and initial data are chosen to be the same as in Figure \ref{fig2} except for the domain size. The numerics suggests that intervals with large length tends to support the formations of more spikes.}\label{fig5}
\end{figure}

\begin{figure}[h!]
\centering
\includegraphics[width=\textwidth,height=3in]{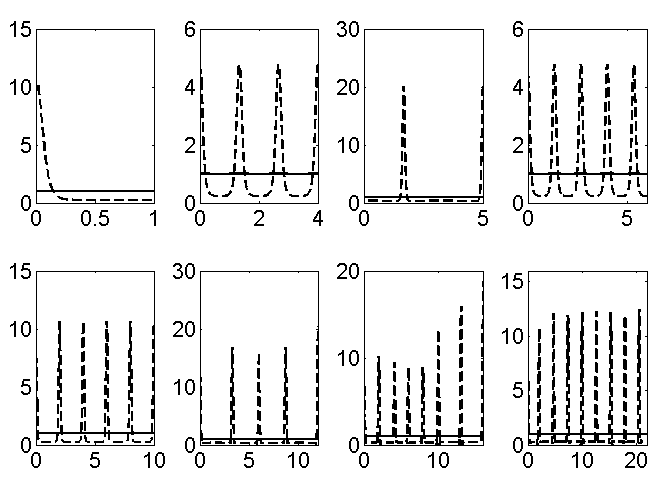}
  \caption{Spatial--temporal behaviors of $u(x,t)$ of (\ref{11}) over domains with different sizes $L$.  The parameters in the eight plots are taken to be $d_1=0.1$, $d_2=\lambda=1$, $\chi=5$; initial data are $(u_0=v_0)=(\bar u,\bar v)+(0.01,0.001,0.02)\cos 1.5\pi x$.  We observe that large interval length supports more interior spikes and the merging or emerging of spikes.}\label{fig6}
\end{figure}

\section{Conclusions and Discussions}\label{sec5}
In this paper, we establish the existence and stability of nonconstant positive steady states of one--dimensional Keller--Segel type morphogenesis models (\ref{11}) under homogeneous Neumann boundary conditions.  Its prototype system (\ref{12}) has been extensively studied by many scholars to model the cellular chemotaxis phenomenon--see the survey paper by \cite{HP}. For the simplified version (\ref{11}),  it is rigourously proved that the chemoattraction rate $\chi$ has the effect of destabilizing the homogeneous steady state.  Then spatially inhomogeneous steady states emerge from the homogeneous one through bifurcation as $\chi$ passes through a certain critical values $\chi_k$, which is explicitly given in (\ref{22}).  Our stability results in Theorem \ref{theorem33} and Theorem \ref{theorem34} suggest that the bifurcating solutions are asymptotically stable for both small and large values of the chemical diffusion rate $d_2$ and unstable if $d_2$ is of intermediate value.

We study the stability of equilibrium $(\bar u,\bar v)=(\lambda,1)$ of (\ref{11}) with respect to spatially heterogeneous perturbations.  It is shown that $\chi$ destabilizes $(\bar u,\bar v)$, which loses its stability at $\chi_0=\min_{k\in \mathbb N^+}\{\chi_k\}$ defined in (\ref{22}) in the sense of Turing's instability.  Taking $\chi$ as the bifurcation parameter, we apply the local bifurcation theories from Crandall--Rabinowitz to establish the existence of nonconstant positive solutions of the stationary system (\ref{15}).  The stability or instability of these bifurcation solutions are also obtained when the sensitivity function are chosen to be linear and logarithmic respectively.  Our main results asserts that the small amplitude bifurcating solutions are asymptotically stable if the chemical diffusion rate $d_2$ is large or small and it is unstable if $d_2$ is of intermediate value.  Numerical simulations have been performed to present the dynamical behaviors of solutions to (\ref{11}) that exhibit complex spatio--temporal patterns, such as boundary spikes and interior spikes, etc.  We also observe the coarsening process in which merging and emerging of spikes occur during the evolutions of the solutions.  Our numerical solutions suggests that interval with large size $L$ supports complex structure than that with small size.

There are a few interesting and important questions about the system (\ref{11}) that deserve exploration in the future.  First of all, one can ask for the criterion on parameters that guarantee the global existence of (\ref{11}) for $\Omega \subset \mathbb{R}^N$, $N\geq2$.  The literature suggests that blow--ups can be inhibited by the degradation in the cellular kinetics, however it remains open so far whether or not this is sufficient when $\frac{\chi}{d_1}$ is large, or the domain has a large space dimension.

The existence of non-existence of nontrivial solutions of  (\ref{15}) in multi-dimensional domain deserves exploring.  First of all, by the bifurcation theory, one can also obtain the existence of nonconstant positive steady states for $\Omega$ in multi--dimensions.  Similarly, detailed calculations can be performed to study the structures and stability of the bifurcating solutions.  Moreover, it is also important to rigourously study the steady state of (\ref{15}) with large amplitude.  For example, and construction of spatial patterns with boundary spikes, interior spike, etc. is another challenging problem that worths future attentions, even in one--dimensional domain.  Moreover, the stability of such spikes are natural and important but extremely difficult question one can work on in the future.

\end{document}